\title{The Monomial Conjecture and order ideals II}
\author{S. P. Dutta}
\address{Department of Mathematics\\
University of Illinois at Urbana-Champaign\\
1409 West Green Street\\
Urbana, Illinois 61801}
\begin{document}

\newcommand{\hookuparrow}{\mathrel{\rotatebox[origin=c]{90}{$\hookrightarrow$}}}
\newcommand{\hookdownarrow}{\mathrel{\rotatebox[origin=c]{-90}{$\hookrightarrow$}}}


\theoremstyle{plain}
\newtheorem*{theorem*}{Theorem}
\newtheorem{theorem}{Theorem}

\newtheorem{innercustomthm}{Theorem}
\newenvironment{customthm}[1]
  {\renewcommand\theinnercustomthm{#1}\innercustomthm}
  {\endinnercustomthm}

%



\newtheorem*{corollary}{Corollary}

\theoremstyle{remark}
\newtheorem*{remark}{Remark}
\newtheorem*{claim}{Claim}

\newtheorem*{namedtheorem}{\theoremname}
\newcommand{\theoremname}{testing}
\newenvironment{named}[1]{\renewcommand{\theoremname}{#1}
  \begin{namedtheorem}}
	{\end{namedtheorem}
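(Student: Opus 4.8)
The plan is to obtain the height estimate for the order ideal as a direct consequence of the Monomial Conjecture --- in Hochster's equivalent form, the Canonical Element Conjecture --- rather than through a second order-ideal-type reformulation (in the spirit of Evans--Griffith's Order Ideal Conjecture, but argued \emph{from}, not alongside, the Monomial Conjecture). The first move is a reduction of the ring. Passing to the completion and applying the Cohen structure theorem, I would realize $R$ as a module-finite extension of a complete regular local ring $A$ of the same dimension; order ideals, heights, the finiteness of projective dimension, and the distinguished syzygy element $\gamma$ are all compatible with the faithfully flat base change along a suitable map $A\hookrightarrow B$ with $B$ complete regular, the element $\gamma$ persists as a minimal generator of the base change of the relevant syzygy, and heights can only drop, so any prime of small height witnessing a failure of the bound survives. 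It therefore suffices to prove the estimate after this reduction, where $R$ sits inside a clean regular ambient ring and where the Monomial / Canonical Element Conjecture is available as a theorem (Hochster in equal characteristic, Andr\'{e} in mixed characteristic).

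The engine of the argument is a translation of $\operatorname{ht}\mathcal{O}(\gamma)\ge i$ into the nonvanishing of a canonical element, where $i$ denotes the syzygy index in question. Writing $N$ for that syzygy module and $k$ for the residue field of $R$, recall $\mathcal{O}(\gamma)=\{\varphi(\gamma):\varphi\in\operatorname{Hom}_R(N,R)\}$, so $\operatorname{ht}\mathcal{O}(\gamma)<i$ precisely when there is a prime $\mathfrak{p}$ with $\operatorname{ht}\mathfrak{p}\le i-1$ and $\gamma_{\mathfrak{p}}\in\mathfrak{p}N_{\mathfrak{p}}$. Localizing at such a $\mathfrak{p}$, the finite free complex attached to $M$ over $R_{\mathfrak{p}}$ has length at least $i>\dim R_{\mathfrak{p}}$, so the New Intersection Theorem together with its depth-sensitive, partial-Euler-characteristic refinement forces the bottom homology of that complex to be controlled by a single obstruction class. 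I would identify that class with the canonical element: from the defining system of parameters of $R$ and from $\gamma$ I would build an explicit map of finite complexes from a Koszul / partial-resolution complex into the \v{C}ech complex on the parameters, and show that a splitting making $\gamma_{\mathfrak{p}}\in\mathfrak{p}N_{\mathfrak{p}}$ would force the image of the natural map $\operatorname{Ext}^{\dim R}_R(k,R)\to H^{\dim R}_{\mathfrak{m}}(R)$ to vanish. Since the Monomial Conjecture asserts this image is nonzero, the desired inequality follows.

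The step I expect to be the main obstacle is the bookkeeping that ties the \emph{particular} element $\gamma$, not merely the module $N$, to the canonical element uniformly over all primes $\mathfrak{p}$ with $\operatorname{ht}\mathfrak{p}\le i-1$. One must verify that completion and the flat passage to the regular base preserve $\gamma$ and the precise shape of its associated finite free complex, and that after localizing at $\mathfrak{p}$ the complex stays long enough for the Intersection Theorem to apply once it is possibly truncated. The genuinely delicate cases are the boundary $\operatorname{ht}\mathfrak{p}=i-1$ and the non-equidimensional localizations $R_{\mathfrak{p}}$, where one needs the sharp depth-sensitivity form of the theorem together with a $\mathfrak{p}$-independent identification of the relevant connecting homomorphism with the canonical element; making that identification canonical --- so that it survives localization without auxiliary choices --- is where the argument will need the most care.
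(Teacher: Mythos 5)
The statement here is the Order Ideal Conjecture itself, which the paper does not prove: it is recorded as an open conjecture in mixed characteristic, and the body of the paper establishes equivalences (Theorems 2.1, 2.2) and special cases rather than the conjecture. Your proposal tries to deduce it from the Monomial/Canonical Element Conjecture, and measured against what the paper actually does, there are two genuine gaps.

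The central one is the localization step. If $\operatorname{ht}\mathcal{O}(\gamma)\le i-1$ and you localize the minimal free resolution at a prime $\mathfrak{p}$ of height $\le i-1$, the truncation $F_i\to\cdots\to F_0$ over $R_{\mathfrak{p}}$ does have length exceeding $\dim R_{\mathfrak{p}}$, but its zeroth homology is $M_{\mathfrak{p}}$, which is not of finite length, so neither the New Intersection Theorem nor its improved form applies to it. The Evans--Griffith reduction to the improved new intersection statement works only when $M$ is locally free on the punctured spectrum, because only then can the localized complex be repaired into one with finite-length higher homology and a distinguished generator of $H_0$ killed by a power of $\mathfrak{p}R_{\mathfrak{p}}$. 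That special case is exactly what was already known from Evans--Griffith together with \cite{D1}, as the introduction states. The whole point of Theorems 2.1 and 2.2 is that the general case requires a different mechanism: one passes to a minimal prime $P$ of the order ideal, replaces the local obstruction by the global assertion that $\operatorname{Syz}^{d}(\Omega)$ has a free summand for the canonical module $\Omega=\operatorname{Ext}^{d}_{R}(R/P,R)$, and identifies that splitting with the non-vanishing of the edge homomorphism $\eta_{d}\colon \operatorname{Ext}^{n-d}_{R}(k,\Omega)\to\operatorname{Ext}^{n}_{R}(k,R)$, which is what the Canonical Element Conjecture for $R/P$ finally supplies. Your sketch never builds this bridge, and the ``identify the obstruction with the canonical element uniformly in $\mathfrak{p}$'' step you flag as delicate is, in the generality you need, not delicate but unavailable.

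The opening reduction is also not routine. The conjecture concerns an arbitrary local ring $R$ and a module of finite projective dimension over $R$; realizing $\widehat{R}$ as a module-finite extension of a regular local ring $A$ does not transport the problem, since a minimal $R$-free resolution is not an $A$-free resolution, $\operatorname{pd}_{A}M$ is typically infinite unless $R$ is $A$-free, and the order ideals computed over $R$ and over $A$ differ. The paper is explicit (remark after Theorem 2.3, and \cite{D8}) that the order ideal conjecture over general local rings is strictly stronger than the monomial conjecture; so even granting the Monomial Conjecture in full, your scheme could at best reach the regular local case, and only after supplying the $\Omega$-free-summand argument that your outline omits.
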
}

\newcommand{\gr}{\operatorname{grade}}
\newcommand{\Syz}{\operatorname{Syz}}
\newcommand{\syz}[2]{\Syz^{#1}(#2)}
\newcommand{\ring}[2]{{\mathcal{O}_{#1}(#2)}}
\newcommand{\hm}[3]{H_{#1}^{#2}(#3)}
\newcommand{\Tor}{\operatorname{Tor}}
\newcommand{\tor}[3]{\Tor_{#1}^{#2}(#3)}
\newcommand{\Ext}{\operatorname{Ext}}
\newcommand{\ext}[3]{\Ext_{#1}^{#2}(#3)}
\newcommand{\Id}{\operatorname{Id}}
\newcommand{\im}{\operatorname{Im}}
\newcommand{\coker}{\operatorname{coker}}
\newcommand{\grade}{\operatorname{grade}}
\newcommand{\Ht}{\operatorname{height}}
\newcommand{\Hom}{\operatorname{Hom}}
\newcommand{\ul}[1]{\underline{#1}}

\begin{abstract}
Let $I$ be an ideal of height $d$ in a regular local ring $(R,m,k=R/m)$ of dimension $n$ and let $\Omega$ denote the canonical module of $R/I$.  In this paper we first prove the equivalence of the following: the non-vanishing of the edge homomorphism $\eta_d: \ext{R}{n-d}{k,\Omega} \rightarrow \ext{R}{n}{k,R}$, the validity of the order ideal conjecture for regular local rings, and the validity of the monomial conjecture for all local rings.  Next we prove several special cases of the order ideal conjecture/monomial conjecture.
\end{abstract}

\maketitle

\section{Introduction}\label{Intro}

This paper is a sequel to our work in \cite{D7}.  Let us first state the two conjectures that would constitute the central theme of this article.

\begin{named}{Monomial Conjecture}
(M. Hochster) Let $(R, m)$ be a local ring of dimension $n$ and let $x_1$, $x_2$, $\ldots$, $x_n$ be a system of parameters of $R$.  Then, for every $t>0$, $(x_1 x_2 \cdots x_n)^{t} \notin (x_1^{t+1}, \ldots, x_n^{t+1})$.
\end{named}

\begin{named}{Order Ideal Conjecture}
(Due to Evans and Griffith)  Let $(R,m)$ be a local ring.  Let $M$ be a finitely generated module of finite projective dimension over $R$ and let $\syz{i}{M}$ denote the $i$-th syzygy of $M$ in a minimal free resolution $F_\bullet$ of $M$ over $R$.  If $\alpha$ is a minimal generator of $\syz{i}{M}$, then the ideal $\ring{\syz{i}{M}}{\alpha}$, generated by entries of $\alpha$ in $F_{i-1}$, has grade greater than or equal to $i$ for every $i>0$.

We note that the validity of the assertion for $\ring{\syz{i}{M}}{\alpha}$ as defined above is equivalent to the validity of the assertion that the order ideal for $\alpha (=\{f(\alpha) | f \in \Hom_R(\syz{i}{M}, R) \})$ has grade greater than or equal to $i$ for every $i>0$.
\end{named}

Both these conjectures are open in mixed characteristic. A brief account of these and other equivalent conjectures is provided towards the end of this introduction. In this paper we focus our attention on several aspects of the order ideal conjecture over regular local rings and its connection with the monomial conjecture. As a consequence of the order ideal conjecture Bruns and Herzog \cite{BrH} observed the following:

\bigskip

\line(1,0){300}

AMS Subject Classification:
Primary:  13D02, 13D22

Secondary: 13C15, 13D25, 13H05

\textit{Let $(R, m, k=R/m)$ be a regular local ring and let $I$ be an ideal of height $d \ge 0$ with a minimal set of generators $\{x_i\}_{1 \le i \le t}$.  Let $K_{\bullet}(\ul{x},R)$ denote the Koszul complex corresponding to $\{x_i\}$.  Then $\theta_{i}: K_{i}(\ul{x}; R)\otimes k \rightarrow \tor{i}{R}{R/I, k}$ are $0$-maps for $i>d$.}

\smallskip
In one of our main results \cite[Corollary 1.4 ]{D7} we proved that \textit{if, for any $d \ge 0$ and for any almost complete intersection ideal $I=(x_1, \ldots, x_d, x_{d+1})$ of height $d$ in a regular local ring $R$, $\theta_{d+1}: K_{d+1}(\ul{x};R) \otimes k \rightarrow \tor{d+1}{R}{R/I, k}$ is the $0$-map, then the monomial conjecture is valid for all local rings.}

%
%

Equivalently, we showed that \textit{if the canonical module $ \Omega$ of $R/I$ is such that $\syz{d}{\Omega}$ has a free summand for $d > 0$, then the monomial conjecture is valid \cite[Section 1.6]{D7}}. The non-vanishing of $\theta_{d+1}$ for almost complete intersection ideals refers to a very special case of the consequence of the order ideal conjecture. In the light of the significance of this special case mentioned above, the following question emerged: How ``special" is this special case? We are now able to answer this question in the first theorem of this paper.

\begin{customthm} {2.1} \label{Tm2.1}
The following statements are equivalent:
\begin{enumerate}
\item[(1)] The order ideal conjecture is valid over any regular local ring $R$.
\item[(2)] For every ideal $I$ of $R$ of height $d>0$, the $d$-th syzygy in a minimal free resolution of the canonical module of $R/I$ has a free summand.
\item[(3)] The same statement as in $(2)$ for every prime ideal of $R$. 
\item[(4)] For every almost complete intersection ideal $I$ of height $d>0$, with $I=(x_1, \ldots, x_{d+1})$, the map $\theta_{d+1}: K_{d+1}(\ul{x}; R)\otimes k \rightarrow \tor{d+1}{R}{R/I,k}$ is the $0$-map.
\end{enumerate}
\end{customthm}

Thus, the very special case of a consequence of the order ideal conjecture, as mentioned above in (4), implies the order ideal conjecture over regular local rings.

For our next theorem, we need the following set up.  Let $(R, m, k)$ be a regular local ring of dimension $n$ and let $I$ be an ideal of height $d$.  From the associativity property of Hom and tensor product, we obtain two spectral sequences with $E_{2}^{i,j}$ terms $\ext{R}{i}{k, \ext{R}{j}{R/I,R}}$ and $\ext{R}{i}{\tor{j}{R}{k,R/I},R}$ that converge to the same limit.  Since the height of $I=d$, we have $\ext{R}{j}{R/I,R}=0$ for $j<d$.  When $i+j=n$ we obtain the ``edge'' homomorphism $\eta_d: \ext{R}{n-d}{k,\ext{}{d}{R/I,R}}\rightarrow\ext{R}{n}{k,R}$.  We note that in this case, the second spectral sequence collapses to the limit $\ext{R}{n}{k, R}$. The obvious question is : when is $\eta_d \neq 0$? Our next Theorem answers this question by relating it with two of the most important homological conjectures in commutative algebra.

\begin{customthm} {2.2} \label{Tm2.2}
The following statements are equivalent:
\begin{enumerate}
\item[(1)]	For every regular local ring $R$ of dimension $n$ and for any ideal $I$ of height $d$ of $R$, the edge homomorphism $\eta_d: \ext{R}{n-d}{k, \ext{}{d}{R/I,R}}$ $\rightarrow \ext{R}{n}{k,R}$ is non-zero.
\item[(2)] The same statement where $I$ is replaced by a prime ideal $P$ of $R$.
\item[(3)] The order ideal conjecture is valid over regular local rings.
\item[(4)] The monomial conjecture is valid for all local rings.
\end{enumerate}
\end{customthm}


The proof of this theorem takes into account our previous work on the canonical element conjecture in \cite{D3} and \cite{D4}. This theorem places the order ideal conjecture over regular local rings at a central position among several homological conjectures. It also shows that the statement of the order ideal conjecture over local rings presents a much stronger assertion than that of the monomial conjecture over local ones. We refer the reader to \cite{D8} for our work involving the general case of the order ideal conjecture.

A very special case of ``$(1) \rightarrow (4)$" in the above theorem was established in [6] over Gorenstein local rings. As a corollary of Theorem 2.2 we obtain the following result which this author has been seeking since his work in [6].

\begin{customthm} {2.3} \label{Tm2.3}
Let $(R,m,k)$ be an equicharacteristic regular local ring of dimension n. Let I be an ideal of R
of height d. Then the edge homomorphism $\eta_d : \Ext_R^{n-d} (k, \Ext^d (R/I, R)) \to \Ext_R^n (k, R)$ is non-zero.
\end{customthm}

In Section 3 we present proofs of several aspects of the order ideal conjecture (we do not mention the results in \cite{D7} here). In our first theorem of this section we prove the following:

\begin{customthm} {3.1} \label{Tm3.1}
Let $M$ be a finitely generated module over a regular local ring $R$ such that $M$ satisfies Serre-condition $S_1$ and $M$ is equidimensional.  Let $d$ be the codimension of $M$.  Then minimal generators of $\syz{i}{M}$ with $i\ge d$ satisfies the order ideal conjecture.
\end{customthm}

In our next proposition we consider a special generator of $\syz{i}{M}$ and show that when such a generator exists, the restriction of finite projective dimension is not required. We prove the following:

\it Let $M$ be a finitely generated module over a Cohen-Macaulay ring. Let $\alpha$ be a minimal generator of $\syz{i}{M}$, $i \geq 1$, such that it generates a free summand of $\syz{i}{M}$. Then $\gr \ring{\syz{i}{M}}{\alpha} \ge i$.
\rm

Our next theorem points out the connection between $\syz{d} {R/P}$ and $\syz{d}{\Omega}$ in terms of having a free summand.

\begin{customthm}{3.3} \label{Tm3.3}
If $P$ is a prime ideal of height d in a regular local ring R such that $\syz{d} {R/P})$ has a free summand, then $\syz{d}{\Omega}$, where $\Omega$ is the canonical module of $R/P$, also has a free summand.
\end{customthm}

As a corollary we derive that \textit{the direct limit map $\ext{R}{n-d}{k,R/P} \rightarrow \hm{m}{n-d}{R/P}$ is non-null and hence $R/P$ satisfies a stronger version of the monomial conjecture.}

\medskip
In our final theorem of this paper we prove the following:

\begin{customthm} {3.4}
Let $R$ be a regular local ring and let $P$ be a prime ideal of height $d$. Let $\nu(\Omega)$ denote the minimal number of generators the canonical module $\Omega$ of $R/P$. If $\nu(\Omega) = 2$ and $R/P$ satisfies Serre-condition $S_4$, then $\syz{d}{\Omega}$ has a free summand.
\end{customthm}

As a corollary we derive the following: \textit{Let $A$ be a complete local domain satisfying Serre-condition $S_4$. Suppose $\nu(\Omega) =2$.  Then $A$ satisfies the monomial conjecture.}

We would remind the reader that the validity of the monomial conjecture when $\nu(\Omega)=1$ and $A$ satisfies Serre-condition $S_3$ has already been established in \cite{D4}.

\medskip

 Let us now provide a brief history.  In \cite{H1} Hochster proved the monomial conjecture in equicharacteristic and proposed it as a conjecture for local rings in mixed characteristic. He also showed that this conjecture is equivalent to the \textit{direct summand conjecture} which asserts the following: \textit{Let $R$ be a regular local ring and let $i: R \rightarrow A$
be a module finite extension. Then $i$ splits as an $R$-module map.} In the eighties in order to prove their syzygy theorem \cite{EG1, EG2, EG3} Evans and Griffith proved an important aspect of order ideals of syzygies over equicharacteristic local rings. The order ideal conjecture stated in this article originates from their work. For their proof of the syzygy theorem Evans and Griffith \cite{EG1} actually needed a particular case of the above conjecture: $M$ is locally free on the punctured spectrum and $R$ is regular local. They reduced the proof of the above special case to the validity of the \textit{improved new intersection conjecture} (also introduced by them) which asserts the following: \textit{Let $(A, m)$ be a local ring of dimension $n$. Let $F_\bullet$ be a complex of finitely generated free $A$- modules.
\[
F_{\bullet}: 0 \rightarrow F_s \rightarrow F_{s-1} \rightarrow \cdots \rightarrow F_1 \rightarrow F_0 \rightarrow 0,
\]
such that $\ell(H_i(F_{\bullet})) < \infty$ for $i >0$ and $H_0(F_{\bullet})$ has a minimal generator annihilated by a power of the maximal ideal $m$. Then dimension of $A \le s$.} Evans and Griffith proved this conjecture for equicharacteristic local rings by using the existence of big Coehn-Macaulay modules in equicharacteristic due to Hochster \cite{H2}.  In the mid-eighties, Hochster proposed the \textit{canonical element conjecture}. In this conjecture Hochster assigned a canonical element $\eta_{R}$ to every local ring $R$ and asserted that $\eta_{R} \ne 0$. An equivalent statement, due to Lipman, of the canonical element conjecture that would be relevant to our work is the following: \textit{let $(A, m, k=A/m)$ be a local ring of dimension $n$ which is homomorphic image of a Gorenstein ring.  Let $\Omega$ denote the canonical module of $A$. Then the direct limit map
\[
\ext{A}{n}{k,\Omega} \rightarrow H_m^{n}(\Omega)
\]
is non-zero} \cite{H3}.

Hochster proved this conjecture in equicharacteristic \cite{H3} and showed that the canonical element conjecture is equivalent to the monomial conjecture and it implies the improved new intersection conjecture. Later this author proved the reverse implication \cite{D1}. Thus the monomial conjecture implies a special case of the order ideal conjecture: the case when $M$ is locally free on the punctured spectrum of $R$ and $R$ is regular local. Over the years different aspects of the monomial conjecture have been studied, special cases have been proved and new equivalent forms have been introduced (see \cite{Bh, BrH, D1, D2, D3, D4, D5, D6, D7, DG, Go, He, K, O, V}).

\medskip
\textbf{Notation.} Throughout this work ``local'' means noetherian local. For any R-module $N$, $N^*$ stands for $\Hom_R(N, R)$ and given any R-linear map $f:M \to N$ between two R-modules, $f^* : M^* \to N^*$ denote the dual map obtained by applying $\Hom (-, R) \ to \ f : M \to N$. Similar notations would be used for maps between two complexes of R-modules.

\section*{Section 2} \label{s2}

\subsection*{}

\begin{customthm} {2.1} \label{Tm2.1}
Let $(R, m, k)$ be a regular local ring.  The following statements are equivalent:
\begin{enumerate}
\item[(1)] The order ideal conjecture is valid on $R$.
\item[(2)] Given any positive integer $d>0$, for every ideal $I$ of height $d$ in $R$, the $d$-th syzygy in a minimal free resolution of the canonical module of $R/I$ has a free summand.
\item[(3)] For every prime ideal of $P$ of height $d$ in $R$, the $d$-th syzygy in a minimal free resolution of the canonical module of $R/P$ has a free summand.
\item[(4)] For every almost complete intersection ideal $I$ of height $d$, with $I=(x_1, \ldots, x_{d},\lambda)$, $\{x_1, \ldots, x_d\}$ being an $R$-sequence, the natural map :
    $K_{d+1}(x_1, \ldots, x_d,\lambda ; R)\otimes k \rightarrow \tor{d+1}{R}{R/I,k}$ is the $0$-map.
\end{enumerate}
\end{customthm}

\begin{proof}
We will prove that $(1) \Rightarrow (2) \Rightarrow (3) \Rightarrow (1)$ and $(1) \Rightarrow (4) \Rightarrow (3)$.

$(1) \Rightarrow (2)$.  Let
\[
(F_{\bullet}, \phi_{\bullet}): \rightarrow R^{t_{d+1}} \stackrel{\phi_{d+1}}{\rightarrow} R^{t_{d}} \stackrel{\phi_{d}}{\rightarrow} R^{t_{d-1}} \rightarrow \cdots \rightarrow R^{t_{1}} \stackrel{\phi_{1}}{\rightarrow} R \rightarrow R/I \rightarrow 0
\]
be a minimal projective resolution of $R/I$ over $R$.  Let $G_i=\coker(\phi_i^*)$ and let $F_{\bullet}^*$ denote the following truncated part of $\Hom_R (F_\bullet, R)$:
\[
{F_{\bullet}^*}: 0 \rightarrow R \stackrel{\phi_{1}^*}{\rightarrow} R^{t_1^*} \rightarrow \cdots \rightarrow R^{t_d^*} \rightarrow R^{t_{d+1}^*} \rightarrow G_{d+1} \rightarrow 0.
\]
We have $H^i(F_{\bullet}^*)=0$ for $i<d$, $H^d(F_{\bullet}^*)=\Omega$ - the canonical module of $R/I$. Let $j: \Omega \hookrightarrow G_d$ denote the natural inclusion of $\ext{}{d}{R/I,R} \hookrightarrow G_d$.  Let
\[
(L_{\bullet}, \psi_{\bullet}):  \rightarrow R^{\gamma_d} \stackrel{\psi_{d}}{\rightarrow} R^{\gamma_{d-1}} \rightarrow \cdots \rightarrow R^{\gamma_0} \rightarrow \Omega \rightarrow 0
\]
be a minimal free resolution of $\Omega$ and $j_{\bullet}: L_{\bullet} \rightarrow F_{\bullet}^*$ denote a left of $j$.

\begin{claim}
$j_d(R^{\gamma_d})=R$.
\end{claim}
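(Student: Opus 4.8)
The plan is to recognise $j_d$ as the map of free covers underlying the map that $j$ induces on $d$-th syzygies, to show this induced map is nonzero by a short duality computation, and then to force it onto using the order ideal conjecture on $R$. For the first point: since $H^i(F_\bullet^*)=0$ for $i<d$, the complex $\mathcal{G}_\bullet\colon 0\to R\xrightarrow{\phi_1^*}R^{t_1^*}\to\cdots\to R^{t_d^*}\to G_d\to 0$ is a (generally non-minimal) free resolution of $G_d$ of length $d$, so $\operatorname{pd}_R G_d\le d$; its $d$-th syzygy is $\ker(\phi_2^*)=\im\phi_1^*$, and since $\phi_1^*$ is the transpose of the row of a minimal generating set of $I$ while $R$ is a domain, $\phi_1^*$ is injective, so $\syz{d}{G_d;\mathcal{G}_\bullet}\cong R$ and the free cover $\mathcal{G}_d=R\to\syz{d}{G_d;\mathcal{G}_\bullet}$ is an isomorphism. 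Restricting $j_\bullet$ to $d$-th syzygies gives $\rho\colon\syz{d}{\Omega}\to\syz{d}{G_d;\mathcal{G}_\bullet}\cong R$, of which $j_d$ is precisely the induced map of free covers; since $L_d\to\syz{d}{\Omega}$ is onto and the $\mathcal{G}$-side cover is an isomorphism, $j_d(R^{\gamma_d})=R$ holds iff $\rho$ is onto, iff $\syz{d}{\Omega}$ has $R$ as a direct summand. Thus the Claim coincides with statement (2) for this $I$, and the task is to produce this split surjection; one also records that $\Omega$ is the torsion submodule of $G_d$ (it is $I$-torsion, while $G_d/\Omega\hookrightarrow R^{t_{d+1}^*}$ is torsion-free).

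For the nonvanishing of $\rho$: dualising $\mathcal{G}_\bullet$ recovers, up to reindexing, the truncation $R^{t_d}\xrightarrow{\phi_d}\cdots\xrightarrow{\phi_1}R$ of the resolution of $R/I$, so $\ext{R}{i}{G_d,R}=0$ for $0<i<d$ and $\ext{R}{d}{G_d,R}\cong R/I$. Dualising $j_\bullet$ to $j_\bullet^*\colon\mathcal{G}_\bullet^*\to L_\bullet^*$, and using that $\mathcal{G}_\bullet^*$ lives in cohomological degrees $0,\dots,d$ while $\ext{R}{i}{\Omega,R}=0$ for $i<d$ (because $\grade_R\Omega=d$: $I\subseteq\operatorname{ann}_R\Omega$ and $\dim_R\Omega=n-d$), on $H^d$ one obtains the natural map $\ext{R}{d}{j,R}\colon R/I\to\ext{R}{d}{\Omega,R}$. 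Under the standard identification $\ext{R}{d}{\Omega,R}\cong\Hom_{R/I}(\Omega,\Omega)$ this is the structural map $R/I\to\Hom_{R/I}(\Omega,\Omega)$, which is injective after replacing $I$ by the intersection of its height-$d$ primary components (harmless, as this leaves $\Omega$ unchanged and makes $R/I$ equidimensional, hence $\Omega$ faithful). So the $d$-cocycle $j_d^*(1)\in L_d^*$ has nonzero class in $\ext{R}{d}{\Omega,R}$, in particular $j_d^*(1)\notin\im\psi_d^*$; and since $\mathcal{G}_{d+1}=0$, the chain-map identity forces $j_d^*(1)$ to kill $\im\psi_{d+1}=\ker\psi_d$, so it descends to the nonzero homomorphism $\rho\in\Hom_R(\syz{d}{\Omega},R)$ of the reduction.

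The remaining step — promoting ``$\rho\ne 0$'' to ``$\rho$ onto'', equivalently $j_d^*(1)\notin mL_d^*$ — is where hypothesis (1) enters, and it is the crux. The plan is to argue contrapositively: if $j_d$ had all entries in $m$, form the mapping cone of $j_\bullet$, a free resolution of the torsion-free finite-pd module $C=G_d/\Omega$; the copy $\mathcal{G}_d=R$ then cannot be cancelled in minimalising the cone (the two differentials meeting it, $j_d$ and $\phi_1^*$, have entries in $m$, and the cancellations of trivial summands made elsewhere only adjust its outgoing differential by $R$-multiples of entries of $\phi_1^*$), and, after incorporating the next stage $R^{t_{d+1}^*}$ of $F_\bullet^*$ (equivalently passing to $G_{d+1}$), it should persist as the ambient free module carrying a minimal generator of a $(d{+}1)$-st syzygy of a finite-pd module built from $C$ whose order ideal is forced inside $I$; since $\grade I=d<d+1$, this contradicts the order ideal conjecture on $R$, so $j_d$ is onto and $\syz{d}{\Omega}\cong R\oplus\ker\rho$ (which also completes $(1)\Rightarrow(2)$). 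I expect the main obstacle to be exactly this bookkeeping: arranging that the persistent summand lands in homological degree $d+1$, so the OIC bound $\ge d+1$ to be violated is strictly above $\grade I$, while keeping the entries of the relevant differential confined to an ideal of grade $\le d$.
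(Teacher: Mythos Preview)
Your contrapositive plan in the last paragraph is exactly the paper's argument, but the paper avoids your bookkeeping worry by a cleaner choice of target complex. Rather than lifting $j$ along a length-$d$ resolution $\mathcal{G}_\bullet$ of $G_d$ and then ``incorporating the next stage,'' the paper takes $F_\bullet^*$ one step further from the outset,
\[
F_\bullet^*:\ 0\to R\xrightarrow{\phi_1^*}R^{t_1^*}\to\cdots\to R^{t_d^*}\to R^{t_{d+1}^*}\to G_{d+1}\to 0,
\]
a complex whose only nonzero higher homology is $\Omega$ sitting at the spot $R^{t_d^*}$. The lift $j_\bullet:L_\bullet\to F_\bullet^*$ then kills that homology, so its mapping cone is immediately a free resolution of $G_{d+1}$, and the copy of $R$ lands automatically in homological degree $d+1$. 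If $j_d(R^{\gamma_d})\subset m$, both differentials touching this $R$ (namely $j_d$ and $\phi_1^*$) have entries in $m$, so it survives minimalisation; its image under $\phi_1^*$ generates exactly $I$, giving a minimal generator of $\syz{d+1}{G_{d+1}}$ with order ideal of height $d<d+1$, contradicting (1). That is the whole proof.

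Your first two paragraphs---the syzygy-level reinterpretation of $j_d$ and the duality computation showing $\rho\ne 0$---are correct but unnecessary for the Claim. The paper never establishes nonvanishing separately; the contrapositive mapping-cone argument does all the work. Your concern that ``promoting $\rho\ne 0$ to $\rho$ onto'' is the crux is misplaced: the order ideal conjecture is not being used to upgrade a nonzero map to a surjective one, it is used directly to rule out $j_d(R^{\gamma_d})\subset m$. So you can delete the middle paragraph entirely, and the degree issue you flag at the end dissolves once you resolve $G_{d+1}$ rather than $G_d/\Omega$.
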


Proof of the Claim.  If possible let $j_d (R^{\gamma_d}) \subset m$. The mapping cone $V_{\bullet}$ of $j_{\bullet}$ is a free resolution of $G_{d+1}$. Since $j_d (R^{\gamma_d}) \subset m$, in the minimal free resolution $U_{\bullet}$ of $G_{d+1}$ extracted from $V_{\bullet}$, the copy of $R$ at the $(d+1)-{th}$ spot of $F_{\bullet}^*$ survives. Since height $I=d$ and the coordinates of $\phi_1^*$ are the minimal generators of $I$, the above statement implies that $\syz{d+1}{G_{d+1}}$ has a minimal generator whose entries generate an ideal of height $d$. This contradicts the order ideal conjecture. Hence $j_d (R^{\gamma_d})= R$.

 We can now choose a basis $e_1, \ldots, e_{\gamma_d}$ of $R^{\gamma_d}$ such that $j_d(e_1)=1$ and $j_d(e_i)=0$ for $i>1$.  Due to the commutativity of diagram in $j_{\bullet} : L_{\bullet} \rightarrow F_{\bullet}^*$,
\[
\begin{CD}
R^{\gamma_{d+1}} @>\psi_{d+1}>> R^{\gamma_d} \\
@VVV @VV j_d V \\
0 @>>> R\\
\end{CD}
\]
it follows that $\psi_{d+1}(R^{\gamma_{d+1}})$ is contained in the free submodule generated by $\{e_i\}_{i \ge 2}$.  Hence $\syz{d}{\Omega}$ has a free summand.

$(2) \Rightarrow (3)$ is obvious.

$(3) \Rightarrow (1)$.  Let $M$ be a finitely generated $R$-module and let $S_i$ denote the $i$-th syzygy of $M$ in a minimal free resolution $(F_{\bullet}, \phi_{\bullet})$ of $M$ over $R$. Since $R$ is an integral domain, the assertion of the order ideal conjecture is valid for minimal generators of $S_1$. So we can assume $d>1$. Let $(F_{\bullet}, \phi_{\bullet})$ be given by
\[
F_{\bullet}: \rightarrow R^{t_{d+1}} \stackrel{\phi_{d+1}}{\rightarrow} R^{t_{d}}  \stackrel{\phi_{d}}{\rightarrow}  R^{t_{d-1}} \rightarrow \cdots \rightarrow R^{t_{1}} \stackrel{\phi_{1}}{\rightarrow} R^{t_{0}} \rightarrow M \rightarrow 0.
\]
Let $e$, $\{e_i\}_{1 \le i \le t_{d}-1}$ be a basis of $R^{t_{d}}$ and let $\phi_d(e)=(a_1, a_2, \ldots, a_{t_{d-1}})$.  We denote by $I$ the ideal generated by $a_1, \ldots, a_{t_{d-1}}$ and let $P$ be a minimal prime ideal of $I$ such that $\Ht(P)=\Ht(I)$. If possible let $\Ht(I) = \Ht(P)< d$.

By induction, since $S_d=\Syz^{d-1}(S_1)$, we can assume the height of $P=(d-1)$.  Let $\Omega(=\ext{}{d-1}{R/P, R})$ denote the canonical module of $R/P$.  Let $(P_{\bullet}, \psi_{\bullet})$ denote a minimal free resolution of $R/P$ and let $F_{\bullet}^*$ denote the truncated part of $\Hom_{R}{}({F_{\bullet}, R})$ up to the d-th spot. Let $\eta_0: R^{t_d^*} \rightarrow R$ be given by $\eta_0(e^*)=1$ and $\eta_0(e_i^*)=0$ for $i>0$. Then im $\eta_0 \circ \phi_d^* \subset P$ and hence $\eta_0$ induces a map $\eta: G_d=\coker \phi_d^* \rightarrow R/P$.  Let $\eta_{\bullet}: F_{\bullet}^* \rightarrow P_{\bullet}$ denote a lift of $\eta$.  We have the following commutative diagram.
\begin{equation}\label{CD1}
\begin{gathered}
\xymatrix@=1em{
F_{\bullet}^*: 0 \ar[r]  & R^{t_0^*} \ar[r]^{\phi_1^*} \ar[d]^{\eta_d} & R^{t_1^*} \ar[r] \ar[d] & \cdots \ar[r] & R^{t_{d-1}^*} \ar[r]^{\phi_{d}^*} \ar[d]^{\eta_1} & R^{t_d^*}\ar[r] \ar[d]^{\eta_0} & G_d \ar[r] \ar[d]^{\eta}& 0 \\
P_{\bullet}:  \ar[r] & R^{s_d} \ar[r]^{\psi_d} & R^{s_{d-1}} \ar[r]^{\psi_{d-1}} & \cdots \ar[r] & R^{s_1} \ar[r]^{\psi_1} & R \ar[r] & R/P \ar[r] & 0
}
\end{gathered}
\end{equation}
Let $\widetilde{G}_{d-1}=\coker \psi_{d-1}^*$; then $\Omega$ is a submodule of $\widetilde{G}_{d-1}$.  Applying $\Hom_{R}{}({-,R})$ to (1), we obtain $\eta_\bullet^* : P_\bullet^* \to F_\bullet$ which induces a map $\eta' : \widetilde G_{d-1} \to S_1 = \Syz^1 (M)$.

Since $R$ is regular local, $\Ht$ $(P)=\grade$ $(P) = (d-1)$ and hence $\Omega=\ext{}{d-1}{R/P,R} \ne 0$. Since $\Omega$ is annihilated by $P$ and $S_1$ is torsion free (submodule of $R^{t_0}$), we have $\eta' (\Omega) =0$. By our assumption, $\syz{d-1}{\Omega}$ has a free summand. Let $(L_{\bullet}, a_{\bullet})$, $L_i = R^{\gamma_i}$, denote a minimal free resolution of $\Omega$. Let $\{\widetilde{e}, \widetilde{e}_i \}$ with $1 \le i \le \gamma_{d-1} -1$ be a basis of $R^{\gamma_{d-1}}$ such that $a_{d-1}(\widetilde{e})$ generates a free summand of $\syz{d-1}{\Omega}$.  Then $a_d(R^{\gamma_d})$ is contained in the free submodule of $R^{\gamma_{d-1}}$ generated by $\{ \widetilde{e}_i\} _{1 \le i \le \gamma_{d-1}}$.
Hence $a_d^*({\widetilde e^*}) =0$, where $\{ {\widetilde e^*}, {\widetilde e_i^*} \}$ denote the corresponding dual basis of $R^{r^*_{d-1}}$. We now want to make the following claim.

{\it Claim.} im ${\widetilde e^*} \epsilon \Ext^{d-1} (\Omega, R) = \Hom_R(\Omega, \Omega) (=\Hom_{R/P} (\Omega, \Omega))$ is the identity element id$_{\Omega}$ in $\Hom_R (\Omega, \Omega)$ (or a unit times id$_{\Omega})$.

{\it Proof of the Claim.}  We can choose $x_1, \ldots, x_{d-1} \in P$, an $R$-sequence, such that $PR_{P}= (x_1, \ldots, x_{d-1}) R_{P}$. Hence the primary decomposition of $(x_1, \ldots, x_{d-1})$ can be written as $P \cap q_2\cap \cdots \cap q_{r}$ where $q_i$ is $P_i$-primary, $\Ht(P_i)={d-1}$, $2 \le i \le r$.  Let $S=R/(x_1, \ldots, x_{d-1})$. Then $(0)=\overline{P}\cap\overline{q_2}\cap \cdots \cap \overline{q_r}$, where $\overline{\nu}$ means the image of $\nu$ in $S$, is a primary decomposition of $(0)$ in $S$.  It can be easily checked that $\Omega_{R/P} =$ the canonical module of $R/P=\ext{R}{d-1}{R/P,R}=\Hom_{S}{}({S/\overline{P},S})=\overline{q}_2\cap \cdots \cap \overline{q}_{r}$.

Let $K_\bullet(\underline{x}; R)$ denote the Koszul complex corresponding to $x_1, ..., x_{d-1}$. Let $\widetilde \alpha : S \to \coker a_{d-1}^*$ be defined by $\widetilde \alpha (\overline 1)$ = im$\tilde e^*$ and let $\alpha_\bullet : K_\bullet(\underline{x}; R) \to L_\bullet ^*$ be a lift of $\widetilde\alpha$. Applying $\Hom (-, R)$ to $\alpha_\bullet$, as above, we obtain a map $\alpha_\bullet^* : L_\bullet \to K^\bullet (\underline x; R)$ which yields a map $\alpha : \Omega \to S$. Since $\Omega$ is a submodule of $R/P$, for any $\mu$ in $\Omega$, $\Omega/ (\mu)$ has codimension $\ge d$. Hence $\alpha$ is injective.
Consider the short exact sequence :
$0 \rightarrow {\Omega} \stackrel{\alpha} \rightarrow S \rightarrow {S/ \Omega}  \rightarrow 0$.

By construction $\Ext_R^{d-1} (S/ \Omega, R) = \Hom_S (S/ \Omega, S) = \overline P$. Applying

$\Hom_R(-, R)$ or $\Hom_S (-, S)$ to the above exact sequence we obtain an injection $\widetilde \beta : R/P = S/ \overline P \hookrightarrow \Hom_S (\Omega, \Omega) = \Ext_R^{d-1} (\Omega, R) \subset \coker a_{d-1}^*$ such that $\widetilde \beta (\overline 1) = $id$_\Omega$=im$\widetilde e^*$. Hence the claim.

Now let us consider $\widetilde \beta : R/P \hookrightarrow \Ext^{d-1} (\Omega, R) \subset \coker a_{d-1}^*$. Let $\beta_\bullet^* : P_\bullet \to L_\bullet^*$, where $\beta_i^* : R^{s_i} \to R^{\gamma_{d-1-i}^*}$ and $\beta_{d-1}^* (1) = \widetilde e^*$, denote a lift of $\widetilde \beta$. Since $\Omega$ is the canonical module of $R/P$, $\coker \widetilde \beta$ has $\grade \ge d+1$. It follows that the corresponding dual map $\beta : \Omega = \Ext^{d-1} (\coker a_{d-1}^* , R) = \Ext^{d-1} (\Ext^{d-1} (\Omega, R), R) \to \Ext^{d-1} (R/P, R) = \Omega \subset \widetilde G_{d-1}$ is an isomorphism. Hence applying $\Hom(-, R)$ to $\beta_\bullet^* : P_\bullet \to L_\bullet^*$, we obtain $\beta_\bullet : L_\bullet \to P_\bullet^*$, a lift of $\beta : \Omega \hookrightarrow \widetilde G_{d-1}$, such that $\beta_{d-1} (\widetilde e) = 1$ and $\beta_{d-1} (\widetilde e_i) = 0, 1 \le i \le \gamma_{d-1}$. Thus we obtain the following commutative diagram.

\begin{equation}\label{CD2}
\begin{gathered}
\xymatrix@=1em{
L_{\bullet}:  \ar[r] & R^{\gamma_{d-1}} \ar[r]^{a_{d-1}} \ar[d]^{\beta_{d-1}} & R^{\gamma_{d-2}} \ar[r] \ar[d] & \cdots \ar[r]  & R^{\gamma_{0}} \ar[r] \ar[d]& \Omega \ar[r]  \ar[d]^{\beta} & 0 \\
P_{\bullet}^*:  0 \ar[r]&  R \ar[r] \ar[d]^{\eta_0^*} & R^{s_{1}^*} \ar[r] \ar[d] & \cdots \ar[r] & R^{s_{d-1}^*} \ar[r] \ar[d]^{\eta_{d-1}^*} & \widetilde{G}_{d-1} \ar[r] \ar[d]^{\eta'} &  0 \\
F_{\bullet}:   \ar[r] & R^{t_d} \ar[r] & R^{t_{d-1}} \ar[r] & \cdots \ar[r] & R^{t_{1}} \ar[r] & S_1 \hookrightarrow  R^{t_0} \ar[r] & M \ar[r] & 0 \\
}
\end{gathered}
\end{equation}
where $\eta': \widetilde{G}_{d-1}\rightarrow S_1$ is induced by $\eta_{d-1}^*$.
\medskip

Since $\eta_{\bullet}^* \circ \beta_{\bullet}$ is a lift of $\eta' \circ \beta =0$, $\eta_{\bullet}^* \circ \beta_{\bullet}$ is homomorphic to 0.  By construction $\eta_{0}^* \circ \beta_{d-1}(\widetilde{e})=e$, which is part of a basis of $R^{t_d}$.  Since both $F_{\bullet}$, $L_{\bullet}$ are minimal free resolutions, this leads to a contradiction. Hence $\Ht(I)\geq d$.

$(1) \Rightarrow (4)$. This has already been mentioned as a special case of a consequence of the order ideal conjecture [2].

$(4) \Rightarrow (3)$.  We first note that by Corollary 1.6 in \cite{D7} the statement in $(4)$ is equivalent to the following: let $\Omega$ denote the canonical module of $R/I$ where $I$ is an almost complete intersection ideal of height $d$. Then $\syz{d}{\Omega}$ has a free summand.
	
	Let $P$ be a prime ideal of height $d$. We can choose $x_1, \ldots, x_d \in P$, an $R$-sequence, such that $PR_{P}= (x_1, \ldots, x_d) R_{P}$. Hence the primary decomposition of $(x_1, \ldots, x_d)$ can be written as $P \cap q_2\cap \cdots \cap q_{r}$ where $q_i$ is $P_i$-primary, $\Ht(P_i)=d$, $2 \le i \le r$.  Let $S=R/(x_1, \ldots, x_d)$. Then $(0)=\overline{P}\cap\overline{q_2}\cap \cdots \cap \overline{q_r}$, where $\overline{\nu}$ means the image of $\nu$ in $S$, is a primary decomposition of $(0)$ in $S$.  It can be easily checked that $\Omega_{R/P} =$ the canonical module of $R/P=\ext{R}{d}{R/P,R}=\Hom_{S}{}({S/\overline{P},S})=\overline{q}_2\cap \cdots \cap \overline{q}_{r}$.

Let us choose $\lambda \in P-\bigcup_{i \ge 2} P_i$. Then $\Omega_{S/\overline{\lambda}S}=\Hom_{S}{}({S/\overline{\lambda}S, S})=\Hom_{S}{}({S/\overline{P},S})=\Omega_{R/P}$. \ Thus, \ the almost complete intersection ideal $(x_1, \ldots, x_d, \lambda)\subset P$ is such that $\Omega_{R/{(x_1, \ldots, x_d, \lambda)}}=\Omega_{R/P}$. Hence $\Syz^d (\Omega_{R/P})$ has a free summand.
\end{proof}

\begin{corollary}
 Let $(R, m, k)$ be a regular local ring. For the validity of the order ideal conjecture it is enough check the validity of the assertion for minimal generators of $\syz{d+1}{M}$ for any finitely generated $R$-module $M$ of codimension $d \ge 0$, in particular for cyclic modules $R/I$ where $I$ is an almost complete intersection ideal of height d.
\end{corollary}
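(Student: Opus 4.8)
The plan is to deduce this corollary from Theorem 2.1 by translating the vanishing of $\theta_{d+1}$ into the order ideal conjecture for a single minimal generator of $\syz{d+1}{R/I}$. By the equivalence $(1)\Leftrightarrow(4)$ of Theorem 2.1, the order ideal conjecture holds over $R$ precisely when, for every almost complete intersection ideal $I=(x_1,\ldots,x_d,\lambda)$ of height $d>0$ with $x_1,\ldots,x_d$ an $R$-sequence, the natural map $\theta_{d+1}\colon K_{d+1}(x_1,\ldots,x_d,\lambda;R)\otimes k\rightarrow\tor{d+1}{R}{R/I,k}$ is the zero map. Since $R$ is Cohen--Macaulay and $\Ht I=d$, the module $R/I$ has codimension $d$; so it is enough to prove: \emph{if $\theta_{d+1}\neq0$, then the order ideal conjecture fails for some minimal generator of $\syz{d+1}{R/I}$.} Granting this, the assumption that the assertion holds for minimal generators of $\syz{d+1}{M}$ for every finitely generated $M$ of codimension $d$ --- in particular for $M=R/I$ with $I$ an almost complete intersection of height $d$ --- forces every such $\theta_{d+1}$ to vanish, and then Theorem 2.1 delivers the full conjecture. (For $d=0$ the assertion is automatic since $R$ is a domain, and the general, non-cyclic, form of the statement follows a fortiori, the modules $R/I$ being among those of codimension $d$.)

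For the italicized claim, fix a minimal free resolution $(F_\bullet,\phi_\bullet)$ of $R/I$ and a lift $\psi_\bullet\colon K_\bullet(x_1,\ldots,x_d,\lambda;R)\rightarrow F_\bullet$ of $\operatorname{id}_{R/I}$. Because $K_\bullet(x_1,\ldots,x_d,\lambda;R)\otimes k$ has zero differentials, under the identifications $K_{d+1}\otimes k=H_{d+1}(K_\bullet\otimes k)$ and $\tor{d+1}{R}{R/I,k}=H_{d+1}(F_\bullet\otimes k)$ the map $\theta_{d+1}$ is $\psi_{d+1}\otimes k$. Suppose $\theta_{d+1}\neq0$; then the image $\psi_{d+1}(1)$ of a generator of $K_{d+1}=R$ lies outside $mF_{d+1}$, hence is part of a basis of $F_{d+1}$, so after an invertible change of basis of $F_{d+1}$ I may take $\psi_{d+1}(1)=e$ to be a basis vector. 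Applying $\phi_{d+1}$ and using $\phi_{d+1}\psi_{d+1}=\psi_d\partial^K_{d+1}$ together with $\partial^K_{d+1}(1)=\sum_{i=1}^{d+1}(-1)^{i-1}y_i\,e_{\hat{i}}$, where $y_1=x_1,\ldots,y_d=x_d$, $y_{d+1}=\lambda$, and $e_{\hat{i}}$ is the basis element of $K_d$ omitting the $i$-th factor, I get $\phi_{d+1}(e)=\psi_d(\partial^K_{d+1}(1))\in I\cdot F_d$. By minimality $\ker\phi_{d+1}\subseteq mF_{d+1}$, so $e\notin mF_{d+1}$ gives $\phi_{d+1}(e)\neq0$; hence $e$ yields a bona fide minimal generator $\phi_{d+1}(e)$ of $\syz{d+1}{R/I}$, and its order ideal $\ring{\syz{d+1}{R/I}}{\phi_{d+1}(e)}$, generated by the entries of $\phi_{d+1}(e)$ in $F_d$, is contained in $I$ and therefore has grade at most $\Ht I=d<d+1$, contradicting the order ideal conjecture for $\syz{d+1}{R/I}$.

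I do not expect a genuine obstacle: the substantive input is Theorem 2.1 (hence \cite[Corollary 1.6]{D7} and the canonical-element results of \cite{D3,D4}), and everything added is an elementary chase with the Koszul complex. The points deserving care are purely bookkeeping: that $\theta_{d+1}=\psi_{d+1}\otimes k$ under the stated identifications; that after the change of basis $\phi_{d+1}(e)$ is still recognized as a minimal generator of $\syz{d+1}{R/I}$ and that one may work with $\ring{\syz{d+1}{R/I}}{\phi_{d+1}(e)}$ in place of the order ideal proper (the equivalence recorded just below the statement of the order ideal conjecture); and that the degenerate case $\operatorname{pd}_R(R/I)=d$ is harmless, since then $\tor{d+1}{R}{R/I,k}=0$ and $\theta_{d+1}$ vanishes trivially.
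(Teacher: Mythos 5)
Your proof is correct and follows essentially the route the paper intends: the corollary is a direct consequence of the equivalence $(1)\Leftrightarrow(4)$ in Theorem~2.1, with the bridge being the (Bruns--Herzog-type) observation that nonvanishing of $\theta_{d+1}$ for an almost complete intersection $I=(x_1,\ldots,x_d,\lambda)$ produces a minimal generator of $\syz{d+1}{R/I}$ whose order ideal lies inside $I$ and so has grade at most $d$. Your chain-map chase making that observation explicit (identifying $\theta_{d+1}$ with $\psi_{d+1}\otimes k$ on the homology of complexes with zero differentials, then using $\partial^K_{d+1}(1)\in I\cdot K_d$) is exactly the elementary bookkeeping the paper suppresses, and the degenerate cases you flag ($d=0$ via $R$ a domain, $\operatorname{pd}_R(R/I)=d$ via $\tor{d+1}{R}{R/I,k}=0$) are handled correctly.
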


\subsection*{}

\textbf{2.2} Let $(R,m,k)$ be a local ring of dimension $n$ and let $I$ be an ideal of $R$ of grade $d$.  Due to the associativity property of $\Hom$ and tensor product we obtain two spectral sequences converging to the same limit whose $E_2^{i,j}$ terms are: $\ext{R}{i}{\tor{j}{R}{k, R/I},R}$ and $\ext{R}{i}{k,\ext{}{j}{R/I,R}}$.


Since grade of $I=d$, $\ext{}{j}{R/I,R}=0$ for $j<d$.  Thus, the standard edge homomorphism reduces to: $\ext{R}{i}{k,\ext{}{d}{R/I,R}} \rightarrow (i+d)$-th limit.  When $R$ is a regular local ring of dimension $n$ and $i+j=n$, this limit is $\ext{R}{n}{k,R}$ and the first $E_2^{i,j}$ mentioned above collapses to this term.  In this situation the following question is raised: when is the edge homomorphism $\eta_d: \ext{R}{n-d}{k, \ext{}{d}{R/I,R}}\rightarrow \ext{R}{n}{k,R}$ non-zero? Our next theorem answers this question in relation to the two most important homological conjectures in commutative algebra.

\begin{theorem*}
The following statements are equivalent:
\begin{enumerate}
\item[(1)]	For every regular local ring $R$ of dimension $n$ and for any ideal $I$ of height $d$ of $R$, the edge homomorphism $\eta_d: \ext{R}{n-d}{k, \ext{}{d}{R/I,R}}\rightarrow \ext{R}{n}{k,R}$ is non-zero.
\item[(2)] The same statement where $I$ is replaced by a prime ideal $P$ of $R$.
\item[(3)] The order ideal conjecture is valid over regular local rings.
\item[(4)] The monomial conjecture is valid for all local rings.
\end{enumerate}
\end{theorem*}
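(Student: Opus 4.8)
The plan is to run the cycle of implications $(1)\Rightarrow(2)\Rightarrow(3)\Rightarrow(4)\Rightarrow(1)$, drawing on Theorem~2.1, the reduction \cite[Corollary~1.4]{D7}, the classical equivalence between the monomial and canonical element conjectures, and the analysis of the canonical element from \cite{D3} and \cite{D4}. The implication $(1)\Rightarrow(2)$ needs no argument, a prime ideal being an ideal.

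For $(2)\Rightarrow(3)$ the heart of the matter is a concrete description of $\eta_d$. Given a prime $P$ of height $d>1$ (the case $d=1$ being trivial over the domain $R$), take a minimal free resolution $(F_\bullet,\phi_\bullet)$ of $R/P$, put $G_d=\coker\phi_d^*$, and recall from the proof of $(1)\Rightarrow(2)$ of Theorem~2.1 that $0\to R\to R^{t_1}\to\cdots\to R^{t_d}\to G_d\to 0$ is a minimal free resolution of $G_d$ and that $\Omega=\ext{}{d}{R/P,R}=H^d(F_\bullet^*)$ sits inside $G_d$. Splicing this resolution with a minimal free resolution of $k$ gives the double complex underlying the two spectral sequences, and a zig‑zag chase along its bottom row shows that $\eta_d$ is the composite
\[
\ext{R}{n-d}{k,\Omega}\xrightarrow{\ \iota_*\ }\ext{R}{n-d}{k,G_d}\xrightarrow{\ \zeta\ }\ext{R}{n}{k,R},
\]
where $\iota\colon\Omega\hookrightarrow G_d$ and $\zeta$ is the iterated connecting homomorphism coming from the $d$ short exact sequences into which the resolution of $G_d$ breaks. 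From the corresponding long exact sequences one checks that every one of these connecting maps is an isomorphism except the last, which is injective (the intervening free modules contribute $\ext{R}{p}{k,-}$ only in degree $p=n$), so $\zeta$ is injective. Hence $\eta_d\neq 0$ exactly when $\iota_*\neq 0$; and if $j_\bullet\colon L_\bullet\to F_\bullet^*$ is a lift of $\iota$ with $L_\bullet$ a minimal free resolution of $\Omega$, then $\iota_*$ is identified with $j_d\otimes_R k$, so $\eta_d\neq 0$ forces the component $j_d\colon R^{\gamma_d}\to R$ to be surjective. The bookkeeping in the proof of $(1)\Rightarrow(2)$ of Theorem~2.1 then produces a free summand of $\syz{d}{\Omega}$, and Theorem~2.1 yields the order ideal conjecture over $R$.

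For $(3)\Rightarrow(4)$ I would again invoke Theorem~2.1: the order ideal conjecture over regular local rings forces $\theta_{d+1}\colon K_{d+1}(\underline{x};R)\otimes k\to\tor{d+1}{R}{R/I,k}$ to be the zero map for every almost complete intersection $I$, and \cite[Corollary~1.4]{D7} turns this into the monomial conjecture for all local rings. For $(4)\Rightarrow(1)$ I would use that the monomial conjecture for all local rings is equivalent to the canonical element conjecture for all local rings (\cite{H3},\cite{D1}). Applying the latter in Lipman's form to $A=R/I$ — a homomorphic image of the Gorenstein ring $R$, with canonical module $\Omega=\ext{R}{d}{R/I,R}$ — the direct limit map $\ext{A}{n-d}{k,\Omega}\to\hm{m}{n-d}{\Omega}$ is non‑zero; since this map factors through $\ext{R}{n-d}{k,\Omega}$ via the change‑of‑rings homomorphism, the direct limit map $\ext{R}{n-d}{k,\Omega}\to\hm{m}{n-d}{\Omega}$ is non‑zero as well. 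It then remains to see that non‑vanishing of this direct limit map forces $\eta_d\neq 0$; this is where \cite{D3} and \cite{D4} are used, via the commutative square relating $\eta_d$, the direct limit map, and the socle inclusion $\ext{R}{n}{k,R}\hookrightarrow\hm{m}{n}{R}$ obtained by running the construction of $\eta_d$ over the whole system $\{\Hom_R(R/m^t,-)\}$.

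The step I expect to require the most care is precisely the concrete identification of $\eta_d$ that feeds both $(2)\Rightarrow(3)$ and $(4)\Rightarrow(1)$: one must track a cohomology class of $\ext{R}{n-d}{k,\Omega}$ through the double complex, recognise $\eta_d$ as the map computed by the lift $j_d$, and reconcile this with the canonical‑element picture of \cite{D3} and \cite{D4} so as to tie it to the direct limit map. Once these identifications are in place, the remaining work is the resolution bookkeeping already available from Theorem~2.1 together with the standard equivalence between the monomial and canonical element conjectures.
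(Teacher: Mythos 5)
Your cyclic scheme $(1)\Rightarrow(2)\Rightarrow(3)\Rightarrow(4)\Rightarrow(1)$ is the same as the paper's, and the easy legs $(1)\Rightarrow(2)$ and $(3)\Rightarrow(4)$ match the paper. The interesting comparison is in the other two legs, where you do two rather different things.

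For $(2)\Rightarrow(3)$ you identify $\eta_d$ by using the brutal truncation $\sigma^{\le d}F_\bullet^{*}$ (a free resolution of $G_d$ of length $d$) and the quotient map $F_\bullet^{*}\twoheadrightarrow\sigma^{\le d}F_\bullet^{*}$ which induces $\Omega=H^d(F_\bullet^{*})\hookrightarrow G_d=H^d(\sigma^{\le d}F_\bullet^{*})$. Tracking the two spectral sequences under this map gives $\eta_d=\zeta\circ\iota_*$ with $\zeta$ an isomorphism (each of the $d$ connecting maps coming from breaking up the length-$d$ resolution into short exact sequences is an isomorphism below degree $n$ and injective at $n$; in fact here $\zeta$ is onto as well because the last map into $\Ext^n(k,R^{t_1^{*}})$ is zero by minimality). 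Then $\iota_*$ is $\Tor_d(\iota)=j_d\otimes k$, and nonvanishing is exactly surjectivity of $j_d$, which is the hypothesis used in the proof of $(1)\Rightarrow(2)$ of Theorem~2.1 to extract the free summand of $\Syz^d(\Omega)$. This is a legitimate and arguably cleaner route than the paper's, which instead computes $\eta_d$ through the double complex $\Hom_R(F_\bullet,I^{\bullet})$, a truncation of the resulting minimal injective complex, Proposition~1.1 of \cite{D1}, and $H_m^0(-)$, and then threads back to $S=R/(x_1,\dots,x_d)$ via $E_S\hookrightarrow E_R$. The two are compatible (one can identify your $\Ext^{n-d}_R(k,G_d)\xrightarrow{\zeta}\Ext^n_R(k,R)$ with the paper's $\Ext^{n-d}_R(k,S)\cong\Ext^n_R(k,R)$), but you should be explicit that the ``zig-zag chase'' really is the naturality of the edge map under $F_\bullet^{*}\twoheadrightarrow\sigma^{\le d}F_\bullet^{*}$, since that is the entire content of your reduction.

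For $(4)\Rightarrow(1)$ there is a genuine gap. You invoke the canonical element conjecture in the form ``the direct limit map $\Ext_A^{n-d}(k,\Omega)\to H_m^{n-d}(\Omega)$ is nonzero,'' pass to $\Ext_R$ by change of rings, and then assert that nonvanishing of $\Ext_R^{n-d}(k,\Omega)\to H_m^{n-d}(\Omega)$ forces $\eta_d\neq 0$ by a commutative square whose right-hand vertical is the socle inclusion $\Ext^n_R(k,R)\hookrightarrow H_m^n(R)$. That square alone does not give the implication: what you need is that the \emph{composite} $\Ext^{n-d}_R(k,\Omega)\to H_m^{n-d}(\Omega)\to H_m^n(R)$ is nonzero, and the bottom horizontal map $H_m^{n-d}(\Omega)\to H_m^n(R)$ is not injective on the whole socle of $H_m^{n-d}(\Omega)$. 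Indeed $H_m^{n-d}(\Omega)\cong\Hom_R(\Hom_{R/P}(\Omega,\Omega),E_R)$ by local duality, and the bottom map factors through $\Hom_R(R/P,E_R)=E_{R/P}$, i.e.\ it is Matlis dual to the inclusion $R/P\hookrightarrow\Hom(\Omega,\Omega)$; if $R/P$ is not $S_2$ this inclusion is proper and the socle of $H_m^{n-d}(\Omega)$ is more than one-dimensional, so the direct limit map could land in the kernel of the bottom map even while being nonzero. The paper avoids this by invoking the canonical element conjecture in the equivalent but sharper form that the \emph{full} composite $\Ext_A^{n-d}(k,\Omega)\to H_m^{n-d}(\Omega)\to E_A$ is nonzero, and then using the commutative diagram with $E_A=\Hom_R(A,E_R)\hookrightarrow E_R$ together with the characterization $\eta_d\neq 0\Leftrightarrow\bigl(\Ext^{n-d}_R(k,\Omega)\to H_m^{n-d}(\Omega)\to E_R\bigr)\neq 0$. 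To close your argument you must similarly use the composite-to-$E_A$ form (which is what \cite{D3}, \cite{D4}, and Hochster actually supply), not merely the nonvanishing of the direct limit map.
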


\begin{proof}
We will prove that $(1) \Rightarrow (2) \Rightarrow (3) \Rightarrow (4) \Rightarrow (1)$.
$(1) \Rightarrow (2)$ is obvious.

$(2) \Rightarrow (3)$.  By Theorem 2.1, in order to prove the order ideal conjecture on a regular local ring $R$, we need to show that for every prime ideal $P$ of height $d$, if $\Omega=\ext{R}{d}{R/P, R}$ (the canonical module of $R/P$), then $\syz{d}{\Omega}$ has a free summand.  Let $x_1$, $\ldots$, $x_d$ be an $R$-sequence contained in $P$.  Let $S=R/(x_1,\ldots, x_d)$.  Then $\Omega=\Hom_{R}{}({R/P, S})=\Hom_{S}{}{(R/P,S})$; let $i: \Omega \hookrightarrow S$ denote the natural inclusion.  Let $K_{\bullet}=K_{\bullet}(x_1, \ldots, x_d; R)$, $L_{\bullet}$ denote the Koszul complex with respect to $x_1$, $\ldots$, $x_d$ and a minimal free resolution of $\Omega$ over $R$ respectively and let $f_{\bullet}: L_{\bullet} \rightarrow K_{\bullet}$ be a lift of $i: \Omega \hookrightarrow S$.  It has been pointed out in Sections 1.5 and 1.6 of \cite{D7} that $\syz{d}{\Omega}$ has a free summand if and only if $f_d(L_d)=R=K_d(\ul{x}; R)$, and hence if and only if $\tor{d}{R}{k,\Omega}\rightarrow\tor{d}{R}{k,S}$ is non-zero.

Since $R$ is a regular local ring of dimension $n$, the maximal ideal $m$ is generated by a regular system of parameters $y_1$, $\ldots$, $y_n$.  Hence $K_{\bullet}(\ul{y}; R)$, the Koszul complex with respect to $y_1$, $\ldots$, $y_n$, is a minimal free resolution of $k=R/m$.  Thus, ${\tilde i}: \tor{d}{R}{k,\Omega} \rightarrow \tor{d}{R}{k,S} \ne 0$ if and only if ${\tilde i}: \ext{R}{n-d}{k, \Omega} \rightarrow \ext{R}{n-d}{k,S}$ is non-zero.  For any local ring $(A, m, k)$, let $E_A$ denote the injective hull of the residue field $k$.  Let $F_{\bullet} (F_i = R^{t_i})$ be a minimal free resolution of $R/P$ and let $I^{\bullet}$ be a minimal injective resolution of $R$. Let us consider the double complex $D^{\bullet \bullet} = \Hom_R(F_\bullet, I^\bullet)$. Since height of $P$ = d, $H^i(D^{\bullet \bullet}) = 0$ for $i<d$, $H^d(D^{\bullet \bullet}) = \Omega$ and $H^i(D^{\bullet \bullet}) = \Ext^i(R/P, R)$ for $i>d$. After deleting the initial exact part of $D^{\bullet \bullet}$ we obtain a minimal complex of injective modules $(T^\bullet, \psi^\bullet) (\{ T^i\}_{i \ge 0})$ over R such that $T^i = \bigoplus_{r+s = d+i} D^{r, s}$ for $i > 0$, $H^0(T^\bullet) = \Omega$ and $H^i (T^\bullet) = \Ext ^{d+i} (R/P, R)$, for $i > 0$. By proposition 1.1 in [5] we can construct a minimal injective complex $J^\bullet ( \{J^i\} _{i \ge 0})$ such that $H^i(J^\bullet) \simeq \Ext^{d+1+i}(R/P, R)$ for $i \ge 0$, and a map $\phi^\bullet$ : $T^\bullet \rightarrow J^\bullet (\phi^i : T^{i+1} \rightarrow J^i, i \ge 0)$ such that $\phi^\bullet$ induces an isomorphism between $H^{i+1}(T^\bullet)$ and $H^i(J^\bullet)$ for $i \ge 0$. The mapping cone of $\phi^\bullet$ provides an injective resolution of $\Omega$ over R. We have the following commutative diagram


\begin{equation}\label{CD1}
\begin{gathered}
\xymatrix@=1em{
 0 \ar[r]  & T^{0} \ar[r] & T^{1} \ar[r] \ar[d]^{\phi^0} &T^{2} \ar[r] \ar[d]^{\phi^{1}} & \cdots \ar[r] & T^{n-d} \ar[r]^{\psi^{n-d}} \ar[d]^{\phi^{n-d-1}} & T^{n-d+1} \ar[r] \ar[d]^{\phi^{n-d}} & \cdots \\
   & 0 \ar[r] & J^{0} \ar[r] & J^{1} \ar[r] & \cdots \ar[r] & J^{n-d-1} \ar[r] & J^{n-d} \ar[r] & \cdots
}
\end{gathered}
\end{equation}

Applying $H_m^0(-)$ (the 0-th local cohomology functor with respect to the maximal ideal $m$) to this diagram we obtain the following commutative diagram

\begin{equation}\label{CD1}
\begin{gathered}
\xymatrix@=1em{
 0 \ar[r] & 0 \ar[r] & 0 \ar[r] \ar[d] & 0 \ar[r] \ar[d] & \cdots 0 \ar[r] \ar[d] & E \ar[r]^{{\tilde \psi}^{n-d}} \ar[d]^{{\tilde \phi}^{n-d-1}} & E^{t_1} \ar[r] \ar[d]^{{\tilde \phi}^{n-d}} & \cdots \\
   & 0 \ar[r] & E^{r_0} \ar[r] & E^{r_1} \ar[r] & \cdots \ar[r] & E^{r_{n-d-1}} \ar[r] & E^{r_{n-d}} \ar[r] & \cdots
}
\end{gathered}
\end{equation}

where $E$ denotes the injective hull of the residue field k over R, ${\tilde \psi}$ etc. denote the corresponding restriction maps and $H_m^0 (J^i) = E^{r_i}$, for $i \ge 0$.

  Now it can be checked that $\eta_d: \ext{R}{n-d}{k,\Omega}\rightarrow \ext{R}{n}{k, R} \ne 0 \Leftrightarrow {\tilde \phi}^{n-d-1}$ in diagram (4) is not injective $\Leftrightarrow$ the composite map $\ext{R}{n-d}{k, \Omega}\rightarrow \hm{m}{n-d}{\Omega} \rightarrow E_{R} \ne 0 \Leftrightarrow$ the composite map $\ext{R}{n-d}{k,\Omega}\rightarrow\hm{m}{n-d}{\Omega} \rightarrow \Hom_R (R/P, E_R) \hookrightarrow E_S (\hookrightarrow E_{R})$ is non-zero. The last equivalence follows from the observations that ker$({\tilde \psi}^{n-d}) = \Hom_R (R/P, E_R)$ and $H_m^{n-d}(\Omega) \cong \Hom_R(\Ext^d(\Omega, R), E_R)$ (Grothendieck duality) $\to \Hom_R (R/P, E_R)$ is an onto map.
Since $x_1$, $\ldots$, $x_d$ form an $R$-sequence and $R$ is regular it follows that $\ext{R}{n}{k,R} \cong \ext{R}{n-d}{k,S}$.  We have the following commutative diagram via $i: \Omega \hookrightarrow S$.
\[
\xymatrix@=1em{
\ext{R}{n-d}{k,\Omega} \ar[r] \ar[dr] & \ext{R}{n-d}{k,S} = k \hookrightarrow E_S \ar@{=}[r] &  \hm{m}{n-d}{S} \\
 & \hm{m}{n-d}{\Omega} \ar[ur] &
}
\]
Thus $\eta_d\ne 0$ if and only if the composite of the bottom maps is non-zero, i.e., if and only if $\widetilde{i}: \ext{R}{n-d}{k,\Omega} \rightarrow \ext{R}{n-d}{k,S}$ is non-zero.

That the spectral sequence map $\eta_d: \ext{R}{n-d}{k,\Omega} \rightarrow \ext{R}{n}{k,R} (=k)$ boils down to $\widetilde{i}: \ext{R}{n-d}{k,\Omega} \rightarrow \ext{R}{n-d}{k,S} (=k)$ can be checked by comparing the double complexes $\Hom_{R}{}({F_{\bullet},J^{\bullet}})$ and $\Hom_{R}{}({K_{\bullet}(\ul{x};R),J^{\bullet})}$ via the natural surjection $S=R/(\ul{x})\rightarrow R/P$ after applying $H_m^0(-)$ to each one of them.

$(3) \Rightarrow (4)$.   This follows readily from Theorem 1.4 in \cite{D7}.

$(4) \Rightarrow (1)$.  Since the monomial conjecture is valid for all local rings, the canonical element conjecture is also valid for all local rings. Thus, if $(A,m,k)$ is a local ring of dimension $t$ such that $A$ is the image of a Gorenstein ring and $\Omega$ its canonical module, the composite of $\ext{A}{t}{k,\Omega} \rightarrow \hm{m}{t}{\Omega}\rightarrow E_A$ is non-zero \cite{H2}.

Let $(R,m,k)$ be a regular local ring and let $I$ be an ideal of height $d>0$.  Let $A=R/I$, $\Omega=\ext{R}{d}{R/I,R}$, the canonical module of $A$.  We are required to prove that the edge homomorphism map $\eta_d: \ext{R}{n-d}{k,\Omega}\rightarrow \ext{R}{n}{k,R}$ is non-null.  Let $F_{\bullet}$, $J^{\bullet}$ denote the minimal projective resolution of $R/I$ and the minimal injective resolution of $R$ over itself respectively.  Considering the double-complex $\Hom_{R}{}({F_{\bullet}, J^{\bullet}})$ and arguing as in the previous part, we observe that $\eta_d\ne 0$ if and only if the composite map $\ext{R}{n-d}{k,\Omega}\rightarrow\hm{m}{n-d}{\Omega}\rightarrow E_{R}$ is non-null.  Since $A$ satisfies the canonical element conjecture the composite map $\ext{A}{n-d}{k,\Omega}\rightarrow\hm{m}{n-d}{A}\rightarrow E_A (=\Hom_{R}{}({A,E_{R}})) \ne 0$.

Hence, from the commutative diagram
\[
\begin{CD}
\ext{A}{n-d}{k, \Omega} @>>> \hm{m}{n-d}{\Omega} @>>> E_{A}\\
@VVV  @| @VVV \\
\ext{R}{n-d}{k,\Omega} @>>>\hm{m}{n-d}{\Omega} @>>> E_{R} \\
\end{CD}
\]
It follows that $\eta_d\ne0$ and our proof is complete.
\end{proof}

{\textbf 2.3} As a consequence of the above theorem we obtain the following.

\begin{theorem*}
Let $(R,m,k)$ be an equicharacteristic regular local ring of dimension n. Let I be an ideal of R
of height d. Then the edge homomorphism $\eta_d : \Ext_R^{n-d} (k, \Ext^d (R/I, R)) \to \Ext_R^n (k, R)$ is non-zero.
\end{theorem*}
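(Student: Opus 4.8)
The plan is to recognize Theorem~2.3 as precisely statement~(1) of Theorem~2.2 restricted to equicharacteristic $R$, and to observe that the proof of the implication ``$(4)\Rightarrow(1)$'' given above never uses the monomial conjecture in full generality: it invokes only the canonical element conjecture for the single ring $A=R/I$. Since $R$ is equicharacteristic, so is $A=R/I$, and the canonical element conjecture is a theorem of Hochster in the equicharacteristic case \cite{H3}.

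Concretely, I would set $A=R/I$ and $\Omega=\ext{R}{d}{R/I,R}$, the canonical module of $A$. Since $R$ is regular, $A$ is the homomorphic image of a Gorenstein ring, and $A$ is equicharacteristic because $R$ is; moreover $\dim A=n-d$. By Hochster's theorem on the canonical element conjecture in equicharacteristic, in Lipman's formulation, the composite
\[
\ext{A}{n-d}{k,\Omega}\longrightarrow \hm{m}{n-d}{\Omega}\longrightarrow E_A
\]
is non-zero, where $E_A=\Hom_R(A,E_R)$ is the injective hull of $k$ over $A$.

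Next, exactly as in the proof of ``$(4)\Rightarrow(1)$'' of Theorem~2.2, I would form the double complex $\Hom_R(F_\bullet,J^\bullet)$, where $F_\bullet$ is the minimal free resolution of $R/I$ over $R$ and $J^\bullet$ the minimal injective resolution of $R$; the spectral sequence comparison carried out there shows that $\eta_d\neq 0$ if and only if the composite $\ext{R}{n-d}{k,\Omega}\rightarrow\hm{m}{n-d}{\Omega}\rightarrow E_R$ is non-null. Finally, the commutative diagram
\[
\begin{CD}
\ext{A}{n-d}{k,\Omega} @>>> \hm{m}{n-d}{\Omega} @>>> E_A \\
@VVV @| @VVV \\
\ext{R}{n-d}{k,\Omega} @>>> \hm{m}{n-d}{\Omega} @>>> E_R
\end{CD}
\]
forces the bottom composite to be non-zero because the top one is, whence $\eta_d\neq 0$.

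There is no essential obstacle beyond what is already established in Theorem~2.2; the one point that requires care is the verification that the argument for ``$(4)\Rightarrow(1)$'' is genuinely local in $A$ --- it uses the canonical element conjecture only for $A=R/I$ --- so that the equicharacteristic hypothesis on $R$, which descends to $A$, is exactly what is needed and the full strength of statement~(4), the monomial conjecture for all local rings, is not invoked.
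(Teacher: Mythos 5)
Your proposal is correct and is exactly the argument the paper leaves implicit: the paper's proof of Theorem~2.3 is the one-line remark that it ``follows readily from the validity of the monomial conjecture/order ideal conjecture over equicharacteristic local rings,'' and your observation that ``$(4)\Rightarrow(1)$'' of Theorem~2.2 invokes the canonical element conjecture only for the single ring $A=R/I$ (which is equicharacteristic when $R$ is) is precisely what makes that remark work. The only cosmetic point is that the relevant reference for Hochster's equicharacteristic proof of the canonical element conjecture is \cite{H3}, as you have it, even though the paper's displayed proof of $(4)\Rightarrow(1)$ cites \cite{H2}.
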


The proof follows readily from the validity of the monomial conjecture/order ideal conjecture over equicharacteristic local rings.

\begin{remark} The equivalence of the monomial conjecture and the order ideal conjecture on regular local rings implies the stronger nature of the validity of the order ideal conjecture for all local rings.  We refer the reader to \cite{D8} and \cite{D9} for relevant observations/reduction and proofs of special cases of this general statement of the order ideal conjecture in mixed characteristic.
\end{remark}

\section*{Section 3} \label{s3}

\subsection*{}

\textbf 3.1 As remarked earlier that for the validity of the order ideal conjecture over a regular local ring $R$, it is enough to check the assertion of the conjecture for minimal generators of $\syz{d+1}{M}$ for finitely generated $R$-modules $M$ of codimension $d$.  At present we are able to prove the following.

\begin{theorem*}
Let $(R,m,k)$ be a regular local ring of dimension $n$ and let $M$ be a finitely general $R$-module of codimension $d$.  Suppose that $M$ satisfies Serre-condition $S_1$and is equidimensional.  Then, for every minimal generator $\beta$ of $S_i(M)(=\syz{i}{M})$, $\gr(\ring{S_i(M)}{\beta}) \ge i$, for $i \ge d$.
\end{theorem*}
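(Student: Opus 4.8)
\emph{Reductions.}
Let $(F_\bullet,\phi_\bullet)$ be a minimal free resolution of $M$ and put $S_j=\syz{j}{M}=\im\phi_j=\ker\phi_{j-1}\subseteq F_{j-1}$. Since $S_j\cong\coker\phi_{j+1}$ and the resolution is minimal, a complete set of minimal generators of $S_j$ is given, up to an automorphism of $F_j$, by the columns of $\phi_j$; so after a change of basis of $F_i$ I may assume $\beta=\phi_i(e_1)$ and that $\ring{S_i}{\beta}$ is the ideal generated by the entries of the first column of $\phi_i$. As $R$ is a regular (hence Cohen--Macaulay) domain, $\gr\ring{S_i}{\beta}=\Ht\ring{S_i}{\beta}$, so it is enough to show $\ring{S_i}{\beta}\not\subseteq P$ for every prime $P$ with $\Ht P<i$. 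Next I reduce the whole statement to the case $d=0$: when $d\ge 1$ the module $S_d=\syz{d}{M}$ is a submodule of the free module $F_{d-1}$, hence torsion free over the domain $R$, so it is equidimensional, satisfies $S_1$, and has codimension $0$; moreover truncating the resolution of $M$ at $F_{d-1}$ is the minimal resolution of $S_d$, whence $\syz{i}{M}=\syz{i-d}{S_d}$ with $i-d\ge0$ and $\beta$ a minimal generator of $\syz{i-d}{S_d}$. Thus it suffices to treat minimal generators of $\syz{i}{M}$, $i\ge1$, with $M$ torsion free.

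\emph{Local freeness and the real content.}
Fix a prime $P$ with $\Ht P<i$. Since $\operatorname{pd}_{R_P}M_P\le\Ht P<i$, the $i$-th syzygy of $M_P$ in the minimal $R_P$-resolution vanishes, so $(\syz{i}{M})_P$ is free over $R_P$; the same holds for $(\syz{i-1}{M})_P$. Hence $0\to(S_i)_P\to(F_{i-1})_P\to(S_{i-1})_P\to0$ splits, $(S_i)_P$ is a free $R_P$-direct summand of $(F_{i-1})_P$, and $\ring{S_i}{\beta}R_P$ is precisely the ideal generated by the coordinates of $\beta$ in a basis of $(S_i)_P$. Consequently $\ring{S_i}{\beta}\not\subseteq P$ is equivalent to: $\beta$ generates a free $R_P$-direct summand of $(S_i)_P$; equivalently, the image of $\beta$ in $\syz{i}{M}\otimes k(P)$ is nonzero; equivalently, $\beta$ survives as part of a minimal generating set of $\syz{i}{M}$ after localization at $P$. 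So the theorem has been reduced to the single assertion that the order-ideal locus $V(\ring{S_i}{\beta})$ has codimension $\ge i$, i.e.\ that a minimal generator of $\syz{i}{M}$ stays a minimal generator under localization at every prime of height $<i$.

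\emph{The torsion-free case.}
Assume $d=0$; the case $i=1$ is immediate (then $\ring{S_1}{\beta}\neq0$ in the domain $R$, so $\gr\ge1$), so take $i\ge2$. Pass to the Auslander--Bridger dual $C=\coker(\phi_{i-1}^{*}\colon F_{i-2}^{*}\to F_{i-1}^{*})$; dualizing identifies $\syz{i}{M}$ with $C^{*}=\Hom_R(C,R)$ and $\ring{S_i}{\beta}$ with $\im(\beta\colon C\to R)$, and analysis of the dual complex $F_\bullet^{*}$ (using torsion-freeness of $M$ to control $\ext{R}{j}{M}{R}$ and hence the homology of $F_\bullet^{*}$ in low degrees) expresses $C$, up to modules supported in codimension $\ge2$, in terms of $M$ and its reflexive hull, with $\operatorname{pd}_RC$ bounded by $i-1$. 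Now suppose for contradiction that $\ring{S_i}{\beta}\subseteq P$ with $\Ht P=h<i$. Over $R_P$ this forces $\beta_P\in m_P(S_i)_P$; combining the (already-established) freeness of the low syzygies of $M_P$ with the mapping cone of $\beta_P\colon R_P\to(S_i)_P$ and with the codimension bounds above, one produces a finite free complex over $R_P$ of length $<i$ whose total homology is nonzero and of finite length. The New Intersection Theorem then yields $h=\dim R_P\ge i$, a contradiction; so $\ring{S_i}{\beta}\not\subseteq P$ for all such $P$.

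\emph{The main obstacle.}
Everything up to the last paragraph is bookkeeping with resolutions, duals and Serre conditions. The genuinely hard part is the persistence of the minimal generator $\beta$ under localization at primes of height $<i$ (equivalently, the codimension bound on $V(\ring{S_i}{\beta})$): it requires manufacturing the auxiliary finite free complex whose homology is both nonzero \emph{and} of finite length, and it is exactly at this point that the $S_1$ plus equidimensionality hypotheses — carried over into the reduction as torsion-freeness — are indispensable, since without them the natural auxiliary complex has homology of positive dimension and the New Intersection Theorem no longer applies.
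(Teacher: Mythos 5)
Your proposal has two genuine gaps, one of which is fatal to the logical structure.

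\textbf{The reduction to $d=0$ loses the codimension.} If you replace $M$ by $N=S_d(M)$ (torsion free, codimension $0$) and note $\syz{i}{M}=\syz{i-d}{N}$, then proving the $d=0$ case of the theorem for $N$ at syzygy index $j=i-d$ yields $\gr\ring{\syz{i-d}{N}}{\beta}\ge i-d$, not $\ge i$: the desired bound is tied to the syzygy index relative to $M$, while after the reduction you are counting relative to $N$. Concretely, for $d=1$, $i=2$ the reduced statement asserts $\gr\ge 1$ (trivial), not the needed $\gr\ge 2$. The paper does pass through $S_1(M)$, but only as an inductive step that yields $\gr\ring{S_i(M)}{\beta}\ge i-1$; the entire remainder of its proof is devoted to ruling out the borderline case $\Ht(I)=i-1$. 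Your reduction discards exactly that hard part.

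\textbf{The New Intersection Theorem application runs the wrong way.} The theorem gives $\dim\le s$ when a free complex of length $s$ has nonzero finite-length homology. Producing a complex of length $<i$ over $R_P$ with $\dim R_P=h<i$ gives $h<i$, which is your hypothesis, not a contradiction; you would need a complex of length $<h$, and the proposal offers no construction achieving that, nor any precise description of the auxiliary complex. More broadly, the ``torsion-free case'' paragraph is a programme, not a proof: the Auslander--Bridger dual is named, the relation between $\syz{i}{M}$ and $C^{*}$ is asserted, and the rest is ``combining \dots one produces.''

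\textbf{Comparison with the paper.} The paper's argument is substantially different. Having reduced (by induction via $S_1(M)$) to $\Ht(I)=i-1$, it picks a minimal prime $P$ of $I$ of height $i-1$, brings in both $\ext{R}{i-1}{R/P,R}=\Omega$ and $\ext{R}{i}{R/P,R}$, builds an explicit minimal free complex $L_\bullet$ with $H_0=\ext{R}{i}{R/P,R}$ and $H_1=\Omega$ using Proposition~1.1 of \cite{D1}, and then derives a contradiction from minimality of the resolutions together with the validity of the order ideal conjecture in equicharacteristic (applied after tensoring a suitable mapping-cone resolution with $R/pR$). The $S_1$ and equidimensionality hypotheses enter via the vanishing $\widetilde\pi(\ext{R}{i}{R/P,R})=0$, because $\ext{R}{i}{R/P,R}$ has codimension $>i\ge d$ while every associated prime of $M$ has height $d$. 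Neither the Auslander--Bridger dual nor the New Intersection Theorem appears in the paper's proof, and the equicharacteristic input (mod-$p$ reduction) is essential there and absent from your sketch.
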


\begin{proof}
 We consider a minimal free resolution of $M$:
\[
(F_{\bullet},\phi_{\bullet}):\rightarrow R^{t_{i+1}}\stackrel{\phi_{i+1}}{\rightarrow} R^{t_{i}} \stackrel{\phi_{i}}{\rightarrow} R^{t_{i-1}} \rightarrow \cdots \rightarrow R^{t_{1}} \stackrel{\phi_{0}}{\rightarrow} R^{t_{0}} \rightarrow
M \rightarrow 0.
\]
Assume $i \ge d$.  Let $\beta$ be a minimal generator of $S_i(M)$.  Let $\{e, e_j\}_{1 \le j \le t_i -1}$ be a basis of $R^{t_i}$ such that $\phi_i(e)=\beta$.  Let $\beta=
(a_1, \ldots, a_{t_{i-1}}) \in R^{t_{i-1}}$ and let $I$ denote the ideal generated by $a_1$, $\ldots$, $a_{t_{i-1}}$.  We assert that the grade of $I$ (which equals the height of $I$) is greater than or equal to $i$.  If possible, let $\Ht(I) \le i-1$.  By induction on $i$, since $S_i(M)=S_{i-1}(S_1(M))$, we can assume $\Ht(I)=i-1$.  Let $P$ be a minimal prime ideal of $I$ such that $\Ht(P)=\Ht(I)=i-1$. Let $G_i=\coker(\phi_i^*)$.  Let $F_{\bullet}^*$ denote the complex
\[
0 \rightarrow R^{t_0^*} \rightarrow R^{t_1^*} \rightarrow \cdots \rightarrow R^{t_{i-1}^*} \rightarrow R^{t_i^*} \rightarrow G_i \rightarrow 0,
\]
and let $\pi': R^{t_i^*} \rightarrow R$ be the projection onto the $e^*$-th component. Then $\pi' \circ \phi_i^*=I$.  Let $(Q_{\bullet}, \psi_{\bullet})$ be a minimal free resolution of $R/P$.  We have the following commutative diagram,
\begin{equation}\label{CD3}
\begin{gathered}
\xymatrix@=1em{
F_{\bullet}^*: 0 \ar[r] & R^{t_0^*} \ar[r]^{\phi_0^*} \ar[d]^{\pi_0} & R^{t_1^*} \ar[r] \ar[d]^{\pi_1} & \cdots \ar[r] & R^{t_{i-1}^*} \ar[r]^{\phi_{i}^*} \ar[d]^{\pi_{i-1}} & R^{t_i^*}\ar[r] \ar[d]^{\pi'} & G_i \ar[r] \ar[d]^{\pi} & 0 \\
Q_{\bullet}:  \ar[r] & R^{s_i} \ar[r]^{\psi_i} & R^{s_{i-1}} \ar[r]^{\psi_{d-1}} & \cdots \ar[r] & R^{s_1} \ar[r]^{\psi_1}& R \ar[r] & R/P \ar[r]  & 0
}
\end{gathered}
\end{equation}
where $\pi_{\bullet}$ is a lift of $\pi: G_i \rightarrow R/P$ induced by $\pi'$.  Applying $\Hom_R (-,R)$ to \eqref{CD3} we obtain the following commutaive diagram,
\begin{equation}\label{CD4}
\begin{gathered}
\xymatrix@=1em{
Q_{\bullet,i}^*: 0 \ar[r] & R \ar[r] \ar[d]^{\pi{'*}} & R^{s_1^*} \ar[r] \ar[d]^{\pi_{i-1}^*} & \cdots \ar[r] & R^{s_{i-1}^*} \ar[r]^{\psi_{i}^*} \ar[d]^{\pi_1^*} & R^{s_i^*} \ar[r] \ar[d]^{\pi_0^*} & \widetilde{G}_i \ar[r]\ar[d]^{\widetilde{\pi}} & 0 \\
F:  \ar[r] & R^{t_i} \ar[r] & R^{t_{i-1}}\ar[r] & \cdots \ar[r] & R^{t_1} \ar[r] & R^{t_0} \ar[r] & M \ar[r] & 0 \\
}
\end{gathered}
\end{equation}
where $\widetilde{G}_i=\coker \psi_i^*$ and $\widetilde{\pi}$ is induced by $\pi^*$.

Let $\Omega=\ext{R}{i-1}{R/P,R}$.  Since $\Ht(P)=i-1$, $\ext{}{j}{R/P,R}=0$ for $j < i-1$ and $\Omega \ne 0$.  Let $\alpha: \Omega \hookrightarrow \coker \psi_{i-1}^*$ denote the natural inclusion.  $\pi_1^*$ maps $\coker \psi_{i-1}^*$ to $S_1(M)$; since $P$ annihilates $\Omega$, $\pi_1^* \circ \alpha=0$.  Since $\Ht(P)=i-1$, the codimension of $\ext{}{i}{R/P,R}>i$.  Let $\gamma:\ext{}{i}{R/P,R}\hookrightarrow\widetilde{G}_i$  denote the natural inclusion.  Since $M$ satisfies Serre-condition $S_1$ and is equidimensional of codimension $d$ and $i \ge d$, we have $\widetilde{\pi}(\ext{}{i}{R/P,R})=0$.

Now applying Proposition 1.1 in \cite{D1} we can construct a minimal free complex $(L_{\bullet}, \theta_{\bullet})$ such that $H_0(L_{\bullet})=\ext{}{i}{R/P,R}$, $H_1(L_{\bullet})=\ext{}{i-1}{R/P,R}=\Omega$ and a map $g_{\bullet}: L_{\bullet} \rightarrow Q_{\bullet,i}^*$ such that it induces the inclusion map $\gamma: \ext{}{i}{R/P,R} \hookrightarrow \coker \psi_i^*$ and isomorphism on higher homologies.  Then the mapping cone $V_{\bullet}$ of $g_{\bullet}$ is a free resolution of $\im \psi_{i+1}^*$.  We have the following commutative diagram.
\begin{equation}\label{CD5}
\begin{gathered}
\xymatrix@=1em{
L_{\bullet}:  \ar[r] & R^{p_i} \ar[r] \ar[d]^{g_i} & R^{p_{i-1}} \ar[r] \ar[d]^{g_{i-1}}& \cdots \ar[r] & R^{p_{1}} \ar[r] \ar[d]^{g_1}& R^{p_0} \ar[r] \ar[d]^{g_0}& \hm{0}{}{L_{\bullet}}\ar[r] \ar[d]^{\gamma} & 0 \\
Q_{\bullet,i}^*:  0 \ar[r] &  R  \ar[r] &  R^{s_1^*}\ar[r] &  \cdots \ar[r] &  R^{s_{i-1}} \ar[r] &  R^{s_i} \ar[r] &  \widetilde{G}_i \ar[r] &  0  \\
}
\end{gathered}
\end{equation}

\begin{claim}
$\im g_i=R$.
\end{claim}

If not, then the copy of $R$ in $Q_{\bullet,i}^*$ survives in a minimal free resolution $U_{\bullet}$ of $\im \psi_{i+1}^*$ extracted from $V_\bullet$.  Since $\Ht(P)=i-1$, from \eqref{CD5} it follows that $\syz{i}{\im \psi_{i+1}^*}$ has a minimal generator whose entries generate an ideal of height less than $i$. Since $\im \psi_{i+1}^* \subset R^{s_{i+1}}$, the mixed characteristic $p(>0)$  is a non-zero divisor on $\im \psi_{i+1}^*$.  Since the order ideal conjecture is valid over equicharacteristic local rings, we arrive at a contradiction by tensoring $U_{\bullet}$  with $R/pR$.  Hence $\im g_i = R$.

Let $e'$ be a part of a basis of $L_i ( = R^{p_i})$ such that $g_i(e')=1$.  By construction $\pi_{\bullet}^* \circ g_{\bullet}$ is a lift of ${\widetilde \pi} \circ \gamma$. Since ${\widetilde \pi} (\Ext^i (R/P, R))=0$, we have ${\widetilde \pi} \circ \gamma = 0$. Hence $\pi_{\bullet}^* \circ g_{\bullet}$ is homotopic to $0$.  Since $L_{\bullet}$ and $F_{\bullet}$ are minimal free complexes and $\pi'^* \circ g_i(e')=e$, a part of a basis of $R^{t_i}=F_i$, we arrive at a contradiction. Hence $\Ht(I)\ge i$, for $i \geq d$.
\end{proof}

\subsection*{}
\textbf{3.2} In our next proposition we study a special case of order ideals of syzygies of modules, not necessarily of finite projective dimension, over any Cohen-Macaulay local ring.

\textbf {Proposition.} \textit {Let $(R, m, k)$ be a Cohen-Macaulay local ring of dimension $n$ and let $M$ be a finitely generated $R$-module.  Let $S_i$ denote the $i$-th syzygy of $M$ for $j>0$. Let $\alpha$ be a minimal generator of $S_i$ such that it generates a free summand of $S_i$.  Then $\Ht(\ring{S_i}{\alpha})\geq i$, for $i \le n-1$,  and $\Ht(\ring{S_i}{\alpha})=n$ for $i \geq n$}.

\begin{proof} Let $(F_\bullet, \phi_\bullet) \{ R^{r_i}, \phi_{i} : R^{r_i} \to R^{r_{i-1}} \}$ be a minimal free resolution of $M$. Let $i = 1$. Since $S_1$ has a free summand generated by $\alpha$ we can write $S_1 = R \alpha \oplus T_1$. Let $\alpha_1 : R \hookrightarrow R^{r_0}$ denote the restriction of the inclusion map : $S_1 \hookrightarrow R^{r_0}$. Let $\alpha_1 (1) = (a_1, \dots, a_{r_0})$. Since coker$\alpha_1$ has projective dimension 1, we have grade $\Ext^1$(coker$\alpha_1, R)$ is $\ge 1$. Thus grade of the ideal generated by $a_1, \dots, a_{r_0}$ is greater than or equal to 1. Now we use induction on $i$. Let $S_i = R \alpha \oplus T_i$. Let $\alpha_i : R \hookrightarrow R^{r_{i-1}}$ denote the restriction of the inclusion map $S_i \hookrightarrow R^{r_{i-1}}$. Let $I$ denote the ideal generated by the entries of $\alpha_i(1)$. Then, by above, grade $I \ge 1$. Let $x \in I$ be a non-zero-divisor on $R$ and let $( \bar \ )$ denote the image modulo $xR$. Then $\overline S_i = \Syz^{i-1}(\overline S_1)$. By induction, grade$(\overline I) \ge {i-1}$. Hence grade $I \ge i$.
\end{proof}

\subsection*{}

\textbf {3.3} The following theorem points out a connection between $d^{th}$ syzygies of $R/P$ and its canonical module in terms of having a free summand.

\begin{theorem*}\label{Tm3.3}
Let $(R,m,k)$ be a complete regular local ring of dimension $n$ and let $P$ be a prime ideal of height $d$, $\Omega=\ext{}{d}{R/P,R}$ –--- the canonical module for $R/P$.  Suppose $\syz{d}{R/P}$ has a free summand.  Let $x_1,\ldots,x_d \in P$ form an $R$-sequence; let $S=R/(x_1,\ldots,x_d )$.  Then there exists $\mu \in S$ and a map $\alpha: R/P \hookrightarrow S$,  $\alpha(\bar 1)=\mu$ such that the map : $\ext{R}{n-d}{k, R/P}\rightarrow \ext{R}{n-d}{k,S}$ induced by $\alpha$ is non-null.  Moreover, $\syz{d}{\Omega}$ has a free summand.
\end{theorem*}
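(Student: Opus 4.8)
The plan is to read a specific nonzero class $\mu\in\Omega$ off the free summand of $\syz{d}{R/P}$ and use it both to build the map $\alpha$ and to detect the free summand of $\syz{d}{\Omega}$. For the setup, write $\ul x=x_1,\dots,x_d$, $S=R/(\ul x)$. Exactly as in the proof of Theorem 2.1, $(\ul x)$ has an irredundant primary decomposition $P\cap q_2\cap\cdots\cap q_r$ with all $\Ht(P_i)=d$ (since $S$ is Cohen--Macaulay), and $\Omega=\ext{R}{d}{R/P,R}=\Hom_S(S/\bar P,S)=\bar q_2\cap\cdots\cap\bar q_r\subseteq S$; because $\Omega\cap\bar P=0$ the composite $\Omega\hookrightarrow S\twoheadrightarrow R/P$ is injective, so $\Omega$ is torsion-free over $R/P$ and every nonzero $\mu\in\Omega$ yields an injection $\alpha_\mu:R/P\hookrightarrow S$, $\bar 1\mapsto\mu$. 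I will also use, exactly as in the proof of Theorem 2.2, the natural Koszul self-duality $\ext{R}{n-d}{k,N}\cong\tor{d}{R}{k,N}$ (the Koszul complex on a regular system of parameters resolves $k$), and the fact from Sections 1.5--1.6 of \cite{D7} that, for $i:\Omega\hookrightarrow S$ the inclusion, $\syz{d}{\Omega}$ has a free summand if and only if $i_*:\tor{d}{R}{k,\Omega}\to\tor{d}{R}{k,S}$ is nonzero.

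To extract $\mu$, let $(F_\bullet,\phi_\bullet)$ be the minimal free resolution of $R/P$. By hypothesis $\syz{d}{R/P}=\im\phi_d=R\omega\oplus T$ for a minimal generator $\omega$; lift $\omega$ to a basis element $e\in F_d$ with $\phi_d(e)=\omega$, and let $\rho:\syz{d}{R/P}\to R$ be the retraction with $\rho(\omega)=1$. Set $\bar\rho=\rho\circ\phi_d\in F_d^*$. Then $\bar\rho\circ\phi_{d+1}=\rho\circ\phi_d\circ\phi_{d+1}=0$, so $\bar\rho$ is a cocycle and defines $\mu:=[\bar\rho]\in H^d(\Hom_R(F_\bullet,R))=\ext{R}{d}{R/P,R}=\Omega$, with $\bar\rho(e)=1$. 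Here minimality of $F_\bullet$ enters decisively: $\im\phi_d\subseteq mF_{d-1}$, so $\omega\in mF_{d-1}$ and hence $w(\omega)\in m$ for every $R$-linear $w:F_{d-1}\to R$. In particular $\mu\neq0$: if $\bar\rho=\phi_d^*(w)$ for some $w\in F_{d-1}^*$ then $1=\bar\rho(e)=w(\omega)\in m$, absurd. (The same calculation shows $\mu\notin m\Omega$, but only $\mu\neq0$ is needed below.)

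Now put $\alpha:=\alpha_\mu:R/P\hookrightarrow S$ and choose a lift $a_\bullet:F_\bullet\to K_\bullet$ of $\alpha$, where $K_\bullet=K_\bullet(\ul x;R)$ is the Koszul resolution of $S$. Dualizing, $a_\bullet^*:\Hom_R(K_\bullet,R)\to\Hom_R(F_\bullet,R)$ induces on $H^d$ the map $\ext{R}{d}{S,R}\to\ext{R}{d}{R/P,R}$ attached to $\alpha$; since $\ext{R}{d}{S,R}\cong S$ is generated by the class of $1\in K_d^*=R$ and $a_d^*(1)=a_d$, this says $a_d\in F_d^*$ is a cocycle representing $\alpha^*(1)=\mu$. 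Hence $a_d=\bar\rho+\phi_d^*(u)$ for some $u\in F_{d-1}^*$, and evaluating at $e$ gives $a_d(e)=1+u(\omega)$, a unit because $u(\omega)\in m$. Therefore $a_d:F_d\to K_d=R$ is onto, so $a_d\otimes k\neq0$; since $\tor{d}{R}{k,\alpha}$ is computed by $a_d\otimes k$ on minimal resolutions, the map $\tor{d}{R}{k,R/P}\to\tor{d}{R}{k,S}$ is nonzero, and by Koszul self-duality $\ext{R}{n-d}{k,R/P}\to\ext{R}{n-d}{k,S}$ is nonzero. This is the first assertion. For the ``moreover,'' $\alpha$ factors as $R/P\xrightarrow{\cdot\mu}\Omega\xrightarrow{i}S$, so this nonvanishing forces $i_*:\ext{R}{n-d}{k,\Omega}\to\ext{R}{n-d}{k,S}$, equivalently $i_*:\tor{d}{R}{k,\Omega}\to\tor{d}{R}{k,S}$, to be nonzero; by the cited result of \cite{D7}, $\syz{d}{\Omega}$ has a free summand.

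The main obstacle I anticipate is the bookkeeping in the third paragraph: pinning down precisely that the top component $a_d$ of a lift of $\alpha_\mu$ represents the class $\mu$ (so that $a_d\equiv\bar\rho$ modulo $\im\phi_d^*$), and keeping track of the dualization identifications well enough to see that the inclusion $\im\phi_d\subseteq mF_{d-1}$ is being applied to the correct evaluation $u(\omega)$. Everything else is formal once Theorems 2.1--2.2 and the input from \cite{D7} are in hand.
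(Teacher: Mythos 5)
Your proof is correct and follows essentially the same line as the paper's: you extract a distinguished class $\mu\in\Omega=\ext{R}{d}{R/P,R}$ from the free summand of $\syz{d}{R/P}$ (via the dual basis element at the $d$-th spot), use it to build $\alpha:R/P\hookrightarrow S$, and detect nonvanishing by checking that the degree-$d$ component of a lift hits a unit, with minimality of $F_\bullet$ supplying the $m$-adic obstruction. The only cosmetic difference is direction and bookkeeping: the paper lifts $\eta:S\to\Omega\subset\coker\phi_d^*$ along $K_\bullet\to F_{\bullet,d}^*$ and then dualizes, and concludes the ``moreover'' by introducing a resolution $L_\bullet$ of $\Omega$ and a homotopy argument, whereas you lift $\alpha_\mu:R/P\to S$ directly, identify $[a_d]=\mu$ via the natural isomorphism $\ext{R}{d}{-,R}\cong\Hom_S(-,S)$ on $S$-modules, and invoke the \cite{D7} criterion ($\syz{d}{\Omega}$ has a free summand iff $\tor{d}{R}{k,\Omega}\to\tor{d}{R}{k,S}$ is nonzero) to get the free summand from the factorization $\alpha=i\circ\beta$ — a modest streamlining, not a different method.
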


\begin{proof}
Let $(F_{\bullet},\phi_{\bullet})$ be a minimal free resolution of $R/P$.
\[
F_\bullet: \rightarrow R^{t_{d+1}} \stackrel{\phi_{d+1}}{\rightarrow}R^{t_{d}} \stackrel{\phi_{d}}{\rightarrow} R^{t_{d-1}} \rightarrow\cdots \rightarrow R^{t_{1}} \stackrel{\phi_{1}}{\rightarrow} R \rightarrow R/P \rightarrow 0.
\]
Let $F_{\bullet}^* = \Hom_{R}{}({F_{\bullet}, R})$ and
\[
F_{\bullet,d}^*:0\rightarrow R \rightarrow R^{t_1^*}\rightarrow \cdots\rightarrow R^{t_{d-1}^*}\rightarrow R^{t_d^*}\rightarrow G_d \rightarrow 0,
\]
where $G_d=\coker (\phi_d^*)$.  Since $\syz{d}{R/P}$ has a free summand, $R^{t_d}$ has a basis $\{e, e_i\}_{1 \le i \le t_d-1}$ such that im $\phi_{d+1}$ is contained in the submodule generated by $\{ e_i\}_{1 \le i \le t_d-1}$ and $\phi_{d+1}^*(e^*)=0$.  Then im $e^* \in \Omega \hookrightarrow G_d$.  By construction, $\Omega=\ext{}{d}{R/P,R}=\Hom_{S}{}({R/P,S})$.  Let $K_{\bullet}(\ul{x};R)$ denote the Koszul complex corresponding to $x_1$, $\ldots$, $x_d$.  Consider the map $\eta: S \rightarrow\Omega \subset G_d$, $\eta(1)=$ im $e^*$.  Let $\eta_{\bullet}: K_{\bullet}(\ul{x};R)\rightarrow F_{\bullet,d}^*$ denote a lift of $\eta$.  We have the following commutative diagram.
\begin{equation}\label{CD6}
\begin{gathered}
\xymatrix@=1em{
K_{\bullet}: 0 \ar[r] & R \ar[r]  \ar[d]^{\eta_d= {\mu}}  & R^{d} \ar[r] \ar[d]^{\eta_{d-1}}& \cdots \ar[r]   & R^{d}\ar[r] \ar[d]^{\eta_{1}}& R \ar[r] \ar[d]^{\eta_{0}} & R/(\ul{x}) = S \ar[d]^{\eta}  \\
F_{\bullet, d}^*:  0 \ar[r] & R  \ar[r] & R^{t_1^*} \ar[r] & \cdots \ar[r] & R^{t_{d-1}^*} \ar[r] & R^{t_d^*} \ar[r] & G_d \ar[r] & 0  \\
}
\end{gathered}
\end{equation}
Since $P$ is a prime ideal of height $d$, $\ext{}{i}{R/P,R}=0$ for $i<d$ and $\Ext_R^i(\Ext_R^i$ $(R/P,R),R)=0$ for $i>d$.  Applying $\Hom_{R}{}({-,R})$ to \eqref{CD6} we obtain the following: $\eta_d^*$ induces a map $\alpha: R/P=\ext{}{d}{G_d,R}\hookrightarrow S$, $\alpha(\overline{1})=\mu$ and $\eta_0^*(e)=1\in R=K_d(\ul{x}; R)$.  Since $\mu P=0$ in $S$, $\mu \in \Omega$ and hence $\alpha$ factors through $R/P \stackrel{\beta}{\hookrightarrow} \Omega \stackrel{\gamma}{\hookrightarrow} S$ where $\beta(\overline{1})=\mu$ and $\gamma$ is the natural inclusion. Let $L_{\bullet}$ be a minimal free resolution of $\Omega$. Let $\beta_{\bullet}:F_{\bullet}\rightarrow L_{\bullet}$ and $\gamma_{\bullet}:L_{\bullet}\rightarrow K_{\bullet}(\ul{x},R)$ denote lifts of $\beta$ and $\gamma$.  Then $\gamma_{\bullet}\circ\beta_{\bullet}$ is homotopic to $\eta^*$.  Since $\eta_0^*(R^{t_d})=R$, we have $\gamma_d(L_d)=R$.  Thus $\syz{d}{\Omega}$ has a free summand.

Since $\ext{R}{n-d}{k,R/P}=\tor{d}{R}{k,R/P}$, $\ext{R}{n-d}{k,S}=\tor{d}{R}{k,S}$ ($R$ being regular local), $\eta_0^*(e)=1$, it follows that $\ext{R}{n-d}{k,R/P}\rightarrow \ext{R}{n-d}{k,S}$, induced by $\alpha$, is non-null.
\end{proof}

\begin{corollary}
Let $R$, $P$ be as above in the theorem and let us assume that $\syz{d}{R/P}$ has a free summand.  Then the direct limit map $\ext{R}{n-d}{k,R/P}\rightarrow \hm{m}{n-d}{R/P}$ is non-zero.
\end{corollary}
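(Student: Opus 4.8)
The plan is to deduce the corollary from Theorem 3.3 together with the naturality of the direct limit (``canonical element'') map $\delta_N\colon\ext{R}{n-d}{k,N}\to\hm{m}{n-d}{N}$, which for any $R$-module $N$ is the structure map from the $t=1$ stage of the direct system $\hm{m}{n-d}{N}=\varinjlim_t\ext{R}{n-d}{R/m^t,N}$ and is functorial in $N$. Theorem 3.3 produces an $R$-sequence $x_1,\dots,x_d\in P$, the complete intersection $S=R/(x_1,\dots,x_d)$, and a map $\alpha\colon R/P\hookrightarrow S$ for which $\alpha_*\colon\ext{R}{n-d}{k,R/P}\to\ext{R}{n-d}{k,S}$ is non-zero. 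Naturality of $\delta$ then gives a commutative square
\[
\begin{CD}
\ext{R}{n-d}{k,R/P} @>{\alpha_*}>> \ext{R}{n-d}{k,S}\\
@V{\delta_{R/P}}VV @VV{\delta_S}V\\
\hm{m}{n-d}{R/P} @>>> \hm{m}{n-d}{S}.
\end{CD}
\]
As noted in the proof of Theorem 3.3, since $R$ is regular of dimension $n$ and $x_1,\dots,x_d$ is an $R$-sequence we have $\ext{R}{n-d}{k,S}\cong\tor{d}{R}{k,S}$, and resolving $S$ by the Koszul complex $K_\bullet(x_1,\dots,x_d;R)$ gives $\tor{d}{R}{k,S}\cong k$; hence $\alpha_*$ is a non-zero $R$-linear map onto a one-dimensional $k$-vector space, so it is surjective.

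Next I would check that $\delta_S\colon\ext{R}{n-d}{k,S}\to\hm{m}{n-d}{S}$ is non-zero. This is precisely the canonical element conjecture (equivalently the monomial conjecture) for $S$, and it holds because $S$ is a complete intersection, hence a Cohen--Macaulay local ring of dimension $n-d$. If one prefers an argument from scratch, the long exact sequences attached to the regular sequence $x_1,\dots,x_d$, together with the fact that $\ext{R}{i}{k,R}$ and $\hm{m}{i}{R}$ are concentrated in degree $n$, identify $\delta_S$ with $\delta_R\colon\ext{R}{n}{k,R}\to\hm{m}{n}{R}$, and $\delta_R$ is non-zero since for a regular system of parameters $y_1,\dots,y_n$ each transition map $R/(y_1^t,\dots,y_n^t)\to R/(y_1^{t+1},\dots,y_n^{t+1})$, namely multiplication by $y_1\cdots y_n$, is injective.

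Combining the two observations: $\delta_S\circ\alpha_*$ is the composite of a surjection with a non-zero map out of a one-dimensional $k$-vector space, hence non-zero; by commutativity of the square the composite of $\delta_{R/P}$ with $\hm{m}{n-d}{R/P}\to\hm{m}{n-d}{S}$ is non-zero, and therefore $\delta_{R/P}$ itself is non-zero, which is the assertion. The only genuinely substantive input beyond Theorem 3.3 is the non-vanishing of $\delta_S$, which I expect to be the main point; but it is routine since $S$ is Cohen--Macaulay, and everything else is naturality of the direct limit map together with the one-dimensionality of $\ext{R}{n-d}{k,S}$.
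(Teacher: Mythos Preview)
Your proof is correct and follows essentially the same route as the paper: the paper simply records the commutative square
\[
\begin{CD}
\ext{R}{n-d}{k,R/P} @>>> \ext{R}{n-d}{k,S}=k\\
@VVV @VV\hookdownarrow V\\
\hm{m}{n-d}{R/P} @>>> \hm{m}{n-d}{S}
\end{CD}
\]
noting that the top map is non-zero by Theorem~3.3 and the right vertical map is injective (since $S$ is Cohen--Macaulay of dimension $n-d$), which is exactly your argument. Your write-up is more explicit about why $\delta_S$ is injective, but the strategy is identical.
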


This follows from our theorem and the following commutative diagram.
\[
\begin{CD}
\ext{R}{n-d}{k, R/P} @>>> \ext{R}{n-d}{k,S} = k \\
@VVV  \hookdownarrow  \\
\hm{m}{n-d}{R/P} @>>> \hm{m}{n-d}{S}\\
\end{CD}
\]
\begin{remark}  Let $A$ be a complete local normal domain of dimension $t$.  In Theorem 3.8 of \cite{D4} we showed that if $\Omega\cap(m_S-m_S^2)\ne \emptyset$, then the direct limit map $i: \ext{A}{t}{k,A} \rightarrow \hm{m}{t}{A}$ is non-null.  However we couldn't figure out the real significance of non-null property of the direct limit map $i$.  The above theorem shows that if $i$ is non-null and $A=R/P$ where $R$ is a complete regular local ring, $d = \Ht(P)$, then $\syz{d}{R/P}$ must have a free summand.  For details we refer the reader to \cite[Section 3]{D4}.
\end{remark}

\subsection*{}

\textbf {3.4} Our final theorem proves another special case of the order ideal/monomial conjecture.

\begin{theorem*}\label{Tm3.4} Let $(R,m,k)$ be a regular local ring of dimension $n$ and let $P$ be a prime ideal of height $d$.  We assume $R/P$ satisfies Serre-condition $S_4$ and $\nu(\Omega) \le 2$, where $\nu(\Omega)$ is the minimal number of generators of the canonical module $\Omega$ of $R/P$.  Then $\syz{d}{\Omega}$ has a free summand.
\end{theorem*}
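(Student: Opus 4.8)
The plan is to run everything through the reformulation already used in the proof of Theorem 2.2. Choose an $R$-sequence $x_1,\dots,x_d\in P$ with $PR_P=(x_1,\dots,x_d)R_P$ and set $S=R/(x_1,\dots,x_d)$; then $S$ is Gorenstein of dimension $n-d$, the prime $\mathfrak p=P/(x_1,\dots,x_d)$ is a minimal prime of $S$ with $S_{\mathfrak p}$ a field, and $\Omega=\Hom_S(S/\mathfrak p,S)=(0:_S\mathfrak p)$ is naturally an ideal of $S$, killed by $\mathfrak p$, with $\nu(\Omega)\le 2$. By Sections 1.5--1.6 of \cite{D7} (recalled in the proof of Theorem 2.2), ``$\syz{d}{\Omega}$ has a free summand'' is equivalent to $f_d(L_d)=R$ for a lift $f_\bullet\colon L_\bullet\to K_\bullet(\underline{x};R)$ of the inclusion $\Omega\hookrightarrow S$ (with $L_\bullet$ a minimal free resolution of $\Omega$ over $R$), equivalently to the non-vanishing of $\tor{d}{R}{k,\Omega}\to\tor{d}{R}{k,S}$. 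First I would dispose of the case $\nu(\Omega)=1$: then $\Omega$ is a cyclic torsion-free module of rank one over the domain $R/P$, hence $\Omega\cong R/P$ and $R/P$ is quasi-Gorenstein; since $R/P$ satisfies $S_4\supseteq S_3$, the monomial conjecture, hence the canonical-element property, holds for $R/P$ by \cite{D4}, and then the ring-by-ring version of the arguments in the proof of Theorem 2.2 (the implication $(4)\Rightarrow(1)$ combined with the criterion established in $(2)\Rightarrow(3)$) gives that $\syz{d}{\Omega}$ has a free summand.

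So assume $\nu(\Omega)=2$. The structural input I would extract is: choosing a minimal generator $u$ of $\Omega$, multiplication by $u$ gives an injection $R/P\hookrightarrow\Omega$ (torsion-freeness over the domain $R/P$), and its cokernel $C$ is cyclic and torsion as an $R/P$-module, so $\operatorname{codim}_R C\ge d+1$. Applying $\Ext_R^{\bullet}(-,R)$ to
\[
0\to R/P\xrightarrow{\ u\ }\Omega\to C\to 0 ,
\]
using $\ext{R}{i}{C,R}=0$ for $i\le d$ together with the biduality isomorphism $\ext{R}{d}{\Omega,R}\cong R/P$ (valid because $R/P$ satisfies $S_2$ and has pure codimension $d$), one ties the dualized minimal resolutions of $R/P$, of $\Omega$ and of $C$ together. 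Here the Serre condition $S_4$ on $R/P$ enters through the standard comparison of the depths of $R/P$ and of its canonical module $\Omega$ along the primes of height $d+1$ and $d+2$: it forces the codimensions of $\ext{R}{d+1}{\Omega,R}$ and $\ext{R}{d+2}{\Omega,R}$ (and of the relevant $\ext{R}{d+2}{C,R}$) to be large enough that the pertinent connecting homomorphisms are injective. This is precisely the extra homological room needed to carry the bookkeeping for the \emph{second} generator one step beyond the $S_3$ argument that handled $\nu(\Omega)=1$.

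With that injectivity in hand, I would finish by a mapping-cone argument: resolve $\Omega$ by the cone of a lift, between minimal free resolutions, of the connecting map $C\to (R/P)[1]$, prune it to a minimal resolution, and then argue --- exactly as in the Claims ``$j_d(R^{\gamma_d})=R$'' and ``$\operatorname{Im}g_i=R$'' of Sections 2 and 3 --- that the copy of $R$ sitting at the $d$-th spot cannot be absorbed, i.e.\ $f_d(L_d)=R$, which is what we want. An alternative I would keep in reserve is to prove first that $\syz{d}{R/P}$ itself has a free summand under these hypotheses, by adapting the ``$\Omega\cap(m_S-m_S^2)\ne\varnothing$''-type general-position argument of \cite[Section 3]{D4} to a $2$-generated $\Omega$, and then invoke Theorem 3.3 to pass from $R/P$ to $\Omega$.

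The step I expect to be the genuine obstacle is the accounting of the Serre condition in the middle paragraph: the canonical module $\Omega$ is only guaranteed to satisfy $S_2$ a priori, so one must verify carefully that $S_4$ on $R/P$ propagates to enough depth on $\Omega$ (and on $C$) at the primes of height $d+1$ and $d+2$ that govern $\ext{R}{d+1}{\Omega,R}$ and $\ext{R}{d+2}{\Omega,R}$; obtaining the bound exactly at $S_4$ (rather than settling for $S_2$ or $S_3$) is the whole content of the hypothesis $\nu(\Omega)=2$, and a careless version of this step would prove only a weaker statement.
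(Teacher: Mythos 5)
Your overall blueprint --- reduce to showing that $\syz{d}{R/P}$ has a free summand and then apply Theorem~3.3 --- is the paper's blueprint too, and your handling of $\nu(\Omega)=1$ coincides with the paper's. But in the $\nu(\Omega)=2$ case there is a genuine gap in your setup, and it is exactly at the point you flag as the obstacle.

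The paper does \emph{not} work with the short exact sequence $0\to R/P\xrightarrow{u}\Omega\to C\to 0$ and connecting homomorphisms. Instead it takes a \emph{minimal free resolution $F_\bullet$ of $\Omega$ itself}, whose $0$-th term is $R^2$, and dualizes. The hypothesis $S_4$ is used in the crisp form $\Ext_R^{d}(\Omega,R)\cong R/P$, $\Ext_R^{d+1}(\Omega,R)=0$, $\Ext_R^{d+2}(\Omega,R)=0$ --- not as a soft codimension/injectivity statement about connecting maps. This vanishing truncates the dualized complex so that $R/P\hookrightarrow G_d=\coker\phi_d^*$ and the truncated dual $F_{\bullet,d+2}^*$ has $\im\phi_{d+3}^*$ (a submodule of a free module, hence $p$-torsion-free) at its far end. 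One then lifts $R/P\hookrightarrow G_d$ from a minimal free resolution $L_\bullet$ of $R/P$ into $F_{\bullet,d+2}^*$ and must show $f_d(L_d)=R^2$, i.e.\ that \emph{both} basis vectors of $R^2=F_0^*$ are hit --- not just one copy of $R$, as your sketch suggests.

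The piece of structure your sketch is missing is the one that makes the mapping-cone contradiction fire. Choosing a basis $e_1,e_2$ of $F_0=R^2$ lifting $\mu_1,\mu_2$, the order ideal of $e_j^*$ in $F_1^*$ is precisely the annihilator $I_j$ of $\Omega/(\mu_i)$ ($i\ne j$), and the embedding $\Omega\hookrightarrow R/P$ together with $S_2$ on $R/P$ forces $\Ht I_j = d+1$ \emph{exactly} (not merely $\ge d+1$; one needs the upper bound $\le d+1$, which comes from $\Omega/(\mu_i)$ being a nonzero submodule of the equidimensional ring $R/(P+\mu_i R)$). If some $e_j^*$ survived in the pruned minimal resolution $U_\bullet$ of $\im\phi_{d+3}^*$, it would sit in homological degree $d+2$ with order ideal of height $d+1<d+2$; tensoring $U_\bullet$ with $R/pR$ and invoking the equicharacteristic order ideal theorem then gives the contradiction. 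Your proposal replaces this concrete height computation with the assertion that the codimensions of $\Ext^{d+1}(\Omega,R)$, $\Ext^{d+2}(\Omega,R)$ are ``large enough that the pertinent connecting homomorphisms are injective,'' which is both vaguer and pointed in the wrong direction: the $S_4$ hypothesis is used for outright \emph{vanishing}, and the decisive numerical input is the upper bound $\Ht I_j\le d+1$, not a lower bound on codimension. As it stands the argument does not close, and the alternative you keep in reserve (adapting the $\Omega\cap(m_S-m_S^2)\neq\varnothing$ argument of \cite[Section~3]{D4}) is not what the paper does either.
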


\begin{proof}
If $\nu(\Omega)=1$ and $R/P$ satisfies Serre-condition $S_3$, the assertion follows from Theorem 2.6 \cite{D4} and Theorem \ref{Tm2.2}.  Let $\{\mu_1, \mu_2\}$ be a minimal set of generators of $\Omega$.  As pointed out in the proof of $(4)\Rightarrow(3)$ in Theorem \ref{Tm2.1}, we have $\Omega \hookrightarrow R/P$.  Then, for $\mu = \mu_1$ or $\mu_2$, we have $\Omega/(\mu)\hookrightarrow R/(P+\mu R)$.  Since $R/P$ satisfies Serre-condition $S_2$, $R/(P+\mu R)$ satisfies $S_1$ and hence $\Omega/(\mu)\cong R/I$, where $\Ht I = d+1$.
We consider a minimal free resolution $(F_{\bullet},\phi_{\bullet})$ of $\Omega$
\[
F_{\bullet}: \rightarrow R^{t_{d+2}} \stackrel{\phi_{d+2}}{\rightarrow}R^{t_{d+1}} \stackrel{\phi_{d+1}}{\rightarrow}R^{t_{d}} \stackrel{\phi_{d}}{\rightarrow} \cdots \rightarrow R^{t_{1}} \stackrel{\phi_{1}}{\rightarrow} R^2 \rightarrow \Omega \rightarrow 0.
\]
Since $R/P$ satisfies Serre-condition $S_4$, we have $R/P \cong \Hom_{R}{}({\Omega,\Omega})=\ext{R}{d}{\Omega,R}$, $\ext{}{d+1}{\Omega,R}=0$, $\ext{}{d+2}{\Omega, R}=0$.  We consider the following truncated part of $F_{\bullet}^*=\Hom_R ({F_{\bullet}, R})$ :
\[
F_{\bullet,d+2}^*:  0 \rightarrow R^{2} \stackrel{\phi_{1}^*} {\rightarrow}R^{t_{1}^*} \rightarrow \cdots \rightarrow R^{t_{d}^*} \rightarrow R^{t_{1+1}^*}\rightarrow R^{t_{d+2}^*} \rightarrow \im \phi_{d+3}^* \rightarrow 0.
\]
Let $G_i=\coker \phi_i^*$; then $R/P=\ext{}{d}{\Omega,R} \stackrel{\alpha}{\hookrightarrow} G_d$ and $\coker (\phi_{d+2}^*)=\im \phi_{d+3}^*$.  Let $L_{\bullet}$ be a minimal free resolution of $R/P$ and $f_{\bullet}: L_{\bullet}\rightarrow F_{\bullet, d+2}^*$ be a lift of $\alpha: R/P \hookrightarrow G_d$.

\begin{claim}
$f_d (L_d) = R^2$.
\end{claim}

If possible, let $f_d(L_d) \ne R^2$.  Then there exists a minimal generator $e$ of $R^2$ such that $e \notin f_d(L_d)$.  The mapping cone $V_{\bullet}$ of $f_{\bullet}$ provides a free resolution of $\im \phi_{d+3}^*$.  In the minimal free resolution of $U_{\bullet}$ of $\im \phi_{d+3}^*$ extracted from $V_{\bullet}$, the copy of $e$ survives as a minimal generator of $U_{d+2}$. Since height$(I) = d+1$, the entries of $\phi_1^*(e)$ has height $d+1<d+2$.  Since $\im \phi_{d+3}^*\subset R^{t_{d+3}^*}$, the mixed characteristic $p(>0)$ is a non-zero-divisor on $\im \phi_{d+3}^*$.  Hence tensoring $U_{\bullet}$ with $R/pR$ we arrive at a contradiction due to the validity of the order ideal conjecture in equicharacteristic.

Hence $f_d(L_d)=R^2$.

This implies that $\syz{d}{R/P}$ has a free summand. Hence, by the previous theorem, $\Syz^d (\Omega)$ has a free summand.
\end{proof}

\begin{corollary}
Let $A$ be a complete local domain satisfying Serre-condition $S_4$.  Assume that the canonical module is generated by two elements.  Then $A$ satisfies the monomial conjecture.
\end{corollary}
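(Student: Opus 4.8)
The plan is to reduce the Corollary to Theorem~3.4 and the Corollary to Theorem~3.3, using Cohen's structure theorem together with Hochster's reduction of the monomial conjecture to the canonical element conjecture.

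First I would put $A$ into the shape needed to apply Theorem~3.4. Since $A$ is a complete local domain, Cohen's structure theorem presents it as $A = R/P$ with $(R,m,k)$ a complete regular local ring of some dimension $n$ and $P$ a prime ideal; set $d = \Ht P$, so $\dim A = n-d$. If $d = 0$ then $A = R$ is regular and the monomial conjecture is trivially valid (a system of parameters is a regular sequence), so we may assume $d>0$. The canonical module $\Omega$ of $A$ is then exactly the module appearing in Theorem~3.4, and the hypotheses there --- that $R/P$ satisfies $S_4$ and $\nu(\Omega)\le 2$ --- are precisely the hypotheses of the Corollary, the case $\nu(\Omega)=1$ being absorbed since $S_4\Rightarrow S_3$, as in the opening line of the proof of Theorem~3.4.

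Next I would invoke Theorem~3.4 through its proof rather than only its statement: that argument first establishes the Claim $f_d(L_d) = R^2$, which is equivalent to saying that $\syz{d}{R/P}$ itself has a free summand (this is recorded explicitly at the end of the proof of Theorem~3.4). With $\syz{d}{R/P}$ having a free summand, the Corollary to Theorem~3.3 applies and gives that the direct limit map $\ext{R}{n-d}{k,R/P}\to\hm{m}{n-d}{R/P}$ is non-zero. Since $n-d = \dim A$, this is exactly the non-vanishing that, in the framework of the proof of $(4)\Rightarrow(1)$ of Theorem~2.2, expresses the canonical element conjecture for $A$: one transfers the statement from the module $R/P$ to its canonical module $\Omega$ and observes that the relevant composite factors through $E_A = \Hom_R(A, E_R)$, exactly as there. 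By Hochster's theorem the canonical element conjecture for a given local ring implies the monomial conjecture for that ring, so $A$ satisfies the monomial conjecture; in fact one obtains the stronger form noted after the Corollary to Theorem~3.3.

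The step I expect to be the main obstacle is the transition from the non-vanishing of the direct limit map computed over $R$ to the corresponding statement intrinsic to $A$ --- that is, checking that a witnessing class can be chosen in the image of the change-of-rings map $\ext{A}{n-d}{k,\Omega}\to\ext{R}{n-d}{k,\Omega}$, compatibly with the maps into $\hm{m}{n-d}{\Omega}$ and into $E_A$. This rests on the surjection $R\twoheadrightarrow A$ (so that $\Hom_R$ and $\Hom_A$ agree on $A$-modules and the direct-limit/local-cohomology diagrams commute) together with the vanishing of the relevant low Ext modules $\ext{R}{q}{R/P,\Omega}$, which is again where the condition $S_4$ does work, just as it produces $\ext{}{d+1}{\Omega,R} = \ext{}{d+2}{\Omega,R} = 0$ in the proof of Theorem~3.4. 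The remaining ingredients --- the structure-theorem reduction, the appeals to Theorems~3.3 and~3.4, and Hochster's implication CEC $\Rightarrow$ MC --- are routine.
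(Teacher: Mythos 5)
Your overall strategy matches the paper's intended one: Cohen's structure theorem gives $A = R/P$ over a complete regular local $R$; the proof of Theorem~3.4 (not just its statement) shows $\Syz^d(R/P)$ has a free summand; and then the Corollary to Theorem~3.3 produces the non-vanishing direct limit map $\Ext_R^{n-d}(k, R/P) \to H_m^{n-d}(R/P)$. Up to that point you are following the paper exactly, and you correctly flag that it is Theorem~3.4's \emph{proof} --- establishing $f_d(L_d) = R^2$ and hence the free summand of $\Syz^d(R/P)$ itself --- that makes the Corollary to Theorem~3.3 applicable.

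Where you diverge is the final step. The paper does not pass through the canonical element conjecture here. After the Corollary to Theorem~3.3 it simply appeals to the fact (recorded in the Introduction and in the Remark following that corollary, and tracing back to \cite{D4}) that non-vanishing of this direct limit map for $R/P$ already gives a \emph{stronger} version of the monomial conjecture for $A$, directly. You instead try to promote the non-vanishing of $\Ext_R^{n-d}(k,R/P)\to H_m^{n-d}(R/P)$ to the Lipman form of CEC for $A$, i.e., non-vanishing of $\Ext_A^{n-d}(k,\Omega)\to H_m^{n-d}(\Omega)$, and then quote Hochster's CEC $\Rightarrow$ MC. That transfer is the real gap, and you do sense it: but the difficulty is not only $R$ versus $A$; you also have to change the \emph{module} from $R/P$ to $\Omega$, and the natural change-of-rings map $\Ext_A^{n-d}(k,-)\to \Ext_R^{n-d}(k,-)$ points the wrong way to deduce the $\Ext_A$ statement from the $\Ext_R$ one. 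The diagram in the proof of $(4)\Rightarrow(1)$ of Theorem~2.2 is used precisely in the other direction (CEC $\Rightarrow$ $\eta_d\neq 0$); nothing there tells you the vertical map $\Ext_A^{n-d}(k,\Omega)\to\Ext_R^{n-d}(k,\Omega)$ is surjective. Your suggestion that $S_4$, via vanishing of $\Ext_R^q(R/P,\Omega)$, would close this gap does not address the issue --- those vanishings (and the ones actually used in Theorem~3.4, $\Ext^{d+1}(\Omega,R)=\Ext^{d+2}(\Omega,R)=0$) concern different Ext groups and do not control surjectivity of the base-change map. The clean fix is to drop the CEC detour altogether and invoke, as the paper does, the consequence from \cite{D4} that non-nullity of the direct limit map for $R/P$ already gives the (stronger) monomial conjecture for $A$.
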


\begin{remark}
The consequence of the order ideal conjecture due to Bruns and Herzog mentioned in the introduction can be improved.  It has been proved in \cite{D9}, that if $R$ is a regular local ring and $I$ is an ideal of $R$ of height $d$ generated by $\{x_i\}$ such that $R/I$ is $S_2$, then $K_d(\ul{x}; R)\otimes k \rightarrow \tor{d}{R}{R/I,k}$ is the $0$-map.
\end{remark}


\end{document}